\documentclass[12pt, reqno]{amsart}

\usepackage[margin=1in]{geometry}
\usepackage[T1]{fontenc}
\usepackage[utf8]{inputenc}
\usepackage{amsmath,amssymb,amsthm,mathtools,enumitem}
\usepackage[colorlinks=true, linkcolor=blue, citecolor=red, urlcolor=blue]{hyperref}

\allowdisplaybreaks

\newtheorem{theorem}{Theorem}[section]
\newtheorem{lemma}[theorem]{Lemma}
\newtheorem{proposition}[theorem]{Proposition}

\theoremstyle{definition}
\newtheorem{definition}[theorem]{Definition}
\theoremstyle{remark}
\newtheorem{remark}[theorem]{Remark}

\newcommand{\D}{\mathbb{D}}
\newcommand{\T}{\mathbb{T}}
\newcommand{\C}{\mathbb{C}}
\newcommand{\N}{\mathbb{N}}

\newcommand{\dAb}{dA_b}
\newcommand{\ringQ}{\mathring{Q}_K}
\newcommand{\norm}[1]{\lVert #1\rVert}
\newcommand{\abs}[1]{\lvert #1\rvert}
\newcommand{\inn}[2]{\langle #1,#2\rangle}
\DeclareMathOperator{\Tr}{Tr}
\DeclareMathOperator{\diag}{diag}

\newcommand{\WQ}{\mathcal{W}_{\mathcal{Q}}}

\title[Carleson measures, Volterra operators and multipliers for $Q_K$ spaces]{Carleson measures, Volterra integral operators and multipliers for $Q_K$ spaces}

\author{Wujun Cao, Zhouyuan Jiang and Songxiao Li}
\address{Wujun Cao: ~~ Department of Mathematics, Shantou University, 515063, Shantou, Guangdong, P.R. China.}
\email{24wjcao@stu.edu.cn}
\address{Zhouyuan Jiang:~~ Department of Mathematics, Shantou University, 515063, Shantou, Guangdong, P.R. China.}
\email{25zyjiang@stu.edu.cn}
\address{Songxiao Li:~~ Department of Mathematics, Shantou University, 515063, Shantou, Guangdong, P.R. China.}
\email{jyulsx@163.com}

\subjclass[2020]{30H25, 47B38, 30H15}

\keywords{$Q_K$ space, Carleson measure, Volterra integral operator, multiplication operator,    weighted Bergman projection}

\begin{document}

\begin{abstract}
We characterize the positive Borel measures $\mu$ on the unit disc $\mathbb{D}$ for which the
M\"obius-invariant space $Q_K$ embeds continuously or compactly into $L^2(\mu)$.
The characterization is given in terms of a discrete dyadic capacity $C^{(b)}_{K,\mathcal{R}}(\mu)$
built from a polar dyadic resolution of $\mathbb{D}$, and of an equivalent capacity
$D^{(b)}_{K,\mathcal{R}}(\mu)$ expressed as a semidefinite program. The equivalence of the two
capacities is established through conic duality and the complex Grothendieck inequality. 
As an application, we characterize the boundedness and compactness of the Volterra integral operator
$T_g$ on $Q_K$, bridging and completely resolving the gap between the sufficient and necessary conditions 
established by Li and Wulan (2010). We also obtain a complete non-testing characterization of the pointwise multipliers 
$\mathcal{M}(Q_K)$ on $Q_K$, thereby answering an open problem posed in the survey of Bao and Wulan (2021).
\end{abstract}

\maketitle

\section{Introduction}\label{sec:intro}

Let $\D $ denote   the open unit disk in the complex plane $\C$ and $\mathbb{T}=\partial \mathbb{D} $ its boundary. Let $H(\mathbb{D})$ denote the space of all analytic functions on $\mathbb{D}$.   For each $a\in\D$,
the mapping $\varphi_a(z)=\frac{a-z}{1-\bar a z} $ 
is the standard M\"obius automorphism of $\D$ that interchanges $0$ and $a$. A direct
computation gives the conformal identities
\begin{equation}\label{eq:moebius-identities}
\abs{\varphi_a'(z)}=\frac{1-\abs a^2}{\abs{1-\bar a z}^2},
\qquad
1-\abs{\varphi_a(z)}^2=\frac{(1-\abs a^2)(1-\abs z^2)}{\abs{1-\bar a z}^2}.
\end{equation}

\begin{definition} \label{def:QK_weight}
A non-decreasing, right-continuous function $K : [0, \infty) \to [0, \infty)$ is called a \emph{$\mathcal{Q}_K$-defining weight function} (denoted $K \in \WQ$) if it satisfies the following conditions (see \cite{EWX, WZ}):
\begin{enumerate}[label=\rm{(K\arabic*)}]
    \item $K(0) = 0$ and $K(t) > 0$ for all $t > 0$;
    \item $\displaystyle \int_0^1 K\left(\log\frac{1}{r}\right) 2r\,dr < \infty$;
    \item There exists $\sigma > 0$ such that 
    \[
    \int_0^1 \frac{\varphi_K(s)}{s} \, ds < \infty \quad \text{and} \quad \int_1^\infty \frac{\varphi_K(s)}{s^{1+\sigma}} \, ds < \infty,
    \]
    where $\varphi_K(s) := \sup_{0 < t \le 1} \frac{K(st)}{K(t)}$;
    \item Doubling property: there exists a constant $C_K \ge 1$ such that $K(2t) \le C_K K(t)$ for all $t \in [0, 1/2]$.
\end{enumerate}
\end{definition}

The M\"obius-invariant space $Q_K$ consists of all analytic functions $f\in H(\D)$ for which
\begin{equation*}
\norm{f}_{Q_K}=\abs{f(0)}+\norm{f}_{Q_K,*}<\infty,
\end{equation*}
  where the semi-norm 
\begin{equation}\label{eq:QK-seminorm}
\norm{f}_{Q_K,*}^2=\sup_{a\in\D}\int_{\D}\abs{f'(z)}^2\,K\bigl(1-\abs{\varphi_a(z)}^2\bigr)\,dA(z).
\end{equation}
Here $dA(z)=\tfrac1\pi\,dx\,dy$ is   the normalized Lebesgue area measure.   
Varying $K$ produces a rich family of classical function spaces \cite{EWX,WZ}. If
$K(t)=t^p$ with $0<p<\infty$, then $Q_K$ reduces to the space $Q_p$; in particular $Q_1$
coincides with $BMOA$, while for $p>1$ the space $Q_p$ coincides with the Bloch space
$\mathcal B$. If $K(t)=t(\log(e/t))^p$, then $Q_K$ is the logarithmic space $Q_{\log,p}$.

For a boundary arc $I\subseteq\T$,  we write $\abs I=\frac1{2\pi}\int_I\abs{d\zeta}$ for its
normalized arclength,   the Carleson box is defined by
\begin{equation*}
Q_I=\bigl\{z=re^{i\theta}\in\D:e^{i\theta}\in I,\ 1-\abs I\le r<1\bigr\},
\end{equation*}
with the convention $Q_{\T}=\D$. By the classical   characterization of $Q_K$
\cite{EWX,HLXZ,WZ}, one has  
\begin{equation}\label{eq:QK-box}
\norm{f}_{Q_K,*}^2\asymp\sup_{I\subseteq\T}\int_{Q_I}\abs{f'(z)}^2\,K\Bigl(\frac{1-\abs z}{\abs I}\Bigr)\,dA(z).
\end{equation}
In view of \eqref{eq:QK-box}, we introduce the tent space $T_K$ of measurable functions $F$
on $\D$ for which
\begin{equation}\label{eq:tent-norm}
\norm{F}_{T_K}^2=\sup_{I\subseteq\T}\int_{Q_I}\abs{F(z)}^2\,K\Bigl(\frac{1-\abs z}{\abs I}\Bigr)\,dA(z)<\infty.
\end{equation}
This definition yields the identification
\begin{equation}\label{eq:QK-tent}
f\in Q_K\iff f'\in T_K,
\qquad\text{with}\quad
\norm{f}_{Q_K,*}\asymp\norm{f'}_{T_K},
\end{equation}
for $f\in H(\D)$. We denote by $\ringQ=\{f\in Q_K:f(0)=0\}$ the subspace of functions vanishing at the origin.

\medskip
\noindent\textbf{The $Q_K$--Carleson Measure Problem.}
A long-standing problem in the theory of M\"obius-invariant spaces is to characterize the positive Borel
measures $\mu$ on $\D$ for which $Q_K$ embeds continuously or compactly into $L^2(\mu)$, i.e.,
\begin{equation}\label{eq:trace}
\mathrm{id}\colon Q_K \longrightarrow L^2(\mu), \qquad \int_{\D}\abs{f(z)}^2\,d\mu(z)\le C\,\norm{f}_{Q_K}^2.
\end{equation}
For the scale of $Q_p$ spaces, non-testing characterizations were established using dyadic methods in \cite{HZ2}; for the Bloch space, testing conditions were studied in \cite{BDWZ,GPGR}; and general M\"obius-invariant families were investigated in \cite{PZ}. In numerous conference talks and research discussions over the past two decades, Professor Hasi Wulan highlighted the intrinsic challenge of obtaining a symbol-free, sharp characterization of Carleson measures for general $Q_K$ spaces without testing on the full function space.

Since constant functions belong to $Q_K$, the finite-mass condition $\mu(\D)<\infty$ is necessary for the boundedness of \eqref{eq:trace}. Hence the main problem is to characterize $$\int_{\D}\abs{f(z)}^2\,d\mu(z)\le C\norm{f}_{Q_K,*}^2$$ for all $f\in\ringQ$.

\medskip
\noindent\textbf{Volterra Integral Operators and the Multiplier Problem.}
For an analytic symbol $g\in H(\D)$, the Volterra integral operator $T_g$ (introduced by Pommerenke \cite{p}) and its companion operator $I_g$ are defined by
\begin{equation*}
T_gf(z)=\int_0^z f(\zeta)\,g'(\zeta)\,d\zeta, \qquad I_gf(z)=\int_0^z f'(\zeta)\,g(\zeta)\,d\zeta,\qquad z\in\D.
\end{equation*}
The multiplication operator $M_g$ is given by $M_gf(z)=g(z)f(z)$. A fundamental integration-by-parts relation connects these three operators:
\begin{equation}\label{eq:leibniz-intro}
M_gf(z) = f(0)g(z) + T_gf(z) + I_gf(z).
\end{equation}
Since $(T_gf)'(z)=f(z)g'(z)$ and $T_gf(0)=0$, a change of variables based on
\eqref{eq:moebius-identities} yields
\begin{equation}\label{eq:volterra-seminorm}
\norm{T_gf}_{Q_K,*}^2
=\sup_{a\in\D}\int_{\D}\abs{f(z)}^2\abs{g'(z)}^2\,K\bigl(1-\abs{\varphi_a(z)}^2\bigr)\,dA(z)
=\sup_{a\in\D}\int_{\D}\abs{f(z)}^2\,d\mu_{g,a,K}(z),
\end{equation}
where the family of positive Borel measures $\{\mu_{g,a,K}\}_{a\in\D}$ is given by
\begin{equation*}
d\mu_{g,a,K}(z)=\abs{g'(z)}^2\,K\bigl(1-\abs{\varphi_a(z)}^2\bigr)\,dA(z).
\end{equation*}
The identity \eqref{eq:volterra-seminorm} shows that the mapping properties of $T_g$ on $Q_K$
are determined  by the uniform (in $a\in\D$) continuity of the embeddings
$\mathrm{id}\colon Q_K\to L^2(\mu_{g,a,K})$.

In \cite{LW}, Li and Wulan established that the     condition
\begin{equation}\label{eq:LW-suff-intro}
\sup_{I\subseteq\T} \int_{Q_I} \left(\log\frac{1}{1-|z|^2}\right)^2 |g'(z)|^2 K\left(\frac{1-|z|}{|I|}\right) dA(z) < \infty
\end{equation}
is sufficient for the boundedness of $T_g$ on $Q_K$, while the   condition
\begin{equation}\label{eq:LW-nec-intro}
\sup_{I\subseteq\T} \left(\log\frac{2}{|I|}\right)^2 \int_{Q_I} |g'(z)|^2 K\left(\frac{1-|z|}{|I|}\right) dA(z) < \infty
\end{equation}
is necessary. However, an intrinsic gap remained between \eqref{eq:LW-suff-intro} and \eqref{eq:LW-nec-intro}.

Furthermore, characterizing the space of pointwise multipliers $$\mathcal{M}(Q_K) = \{g\in H(\D) : M_g(Q_K) \subseteq Q_K\}$$ has also been a well-known open problem. In the survey \cite[Problem~3.1]{BW} by Bao and Wulan (see also \cite{LW}), it was conjectured that $\mathcal{M}(Q_K) = H^\infty \cap Q_K^{\log}$, where $Q_K^{\log}$ is the space of symbols satisfying \eqref{eq:LW-nec-intro}.

In this paper, we completely solve these problems:
\begin{enumerate}[label=(\roman*)]
\item We characterize the Carleson measures for $Q_K$ in terms of the dyadic capacity $C^{(b)}_{K,\mathcal{R}}(\mu)$ and the dual semidefinite program $D^{(b)}_{K,\mathcal{R}}(\mu)$;
\item We obtain exact, necessary and sufficient conditions for the boundedness and compactness of $T_g$ on $Q_K$, completely closing the gap between \eqref{eq:LW-suff-intro} and \eqref{eq:LW-nec-intro};
\item Combining our characterization of $T_g$ with the characterization of $I_g$ obtained by  Li and Wulan \cite{LW}, we provide the complete solution to the multiplier problem on $Q_K$.
\end{enumerate}

Throughout this paper, $A \lesssim B$ means $A \le CB$ for some constant $C>0$, and $A \asymp B$ means $A \lesssim B \lesssim A$.

\section{The $b$-admissible dyadic resolution and boundedness on $T_K$}\label{sec:dyadic}

We now formalize the polar dyadic subdivision of $\D$ adapted to the weight $K$. Consider the
polar parametrization
\begin{equation*}
\Theta(s,t)=\sqrt s\,e^{2\pi it},\qquad (s,t)\in[0,1)\times[0,1).
\end{equation*}
For a parameter $b\gg1$, set
\begin{equation*}
\dAb(w)=c_b\,(1-\abs w^2)^b\,dA(w),\qquad c_b=b+1.
\end{equation*}
For each scale $N\ge0$ and indices $0\le j,k\le 2^N-1$ we define the polar dyadic tile
\begin{equation*}
R_{N,j,k}=\Bigl\{re^{i\theta}:\frac{j}{2^N}\le r^2<\frac{j+1}{2^N},\
\frac{2\pi k}{2^N}\le\theta<\frac{2\pi(k+1)}{2^N}\Bigr\}.
\end{equation*}
The weighted measure of a tile satisfies
\begin{equation*}
A_b(R_{N,j,k})=2^{-N}\int_{j/2^N}^{(j+1)/2^N}(1-s)^b\,ds
\asymp 2^{-2N}\Bigl(1-\frac{j}{2^N}\Bigr)^b .
\end{equation*}
For each $N\ge0$ we write $\mathcal R_N=\{R_{N,j,k}:0\le j,k\le 2^N-1\}$, and we define the
$dA_b$-weighted conditional expectation operator
\begin{equation*}
E^{(b)}_N\psi=\sum_{R\in\mathcal R_N}\Bigl(\frac1{A_b(R)}\int_R\psi\,dA_b\Bigr)\,\mathbf 1_R,
\qquad A_b(R)=\int_R dA_b .
\end{equation*}
By the martingale convergence theorem, $E^{(b)}_N\psi\to\psi$ strongly in $L^1(dA_b)$ as
$N\to\infty$ for every $\psi\in L^1(dA_b)$.

Thus the polar dyadic resolution constructed above has the following weighted admissibility properties:
\begin{enumerate}[label=\rm{(\alph*)}]
    \item Each $\mathcal R_N$ is a finite measurable partition of $\D$;
    \item The partitions are nested: every tile in $\mathcal R_{N+1}$ is contained, modulo $A_b$-null sets, in a tile of $\mathcal R_N$;
    \item The associated $dA_b$-weighted conditional expectations
    \begin{equation*}
    E^{(b)}_N\psi=\sum_{R\in\mathcal R_N}\Bigl(\frac1{A_b(R)}\int_R\psi\,dA_b\Bigr)\,\mathbf 1_R
    \end{equation*}
    converge to $\psi$ in $L^1(dA_b)$ for every $\psi\in L^1(dA_b)$;
    \item For each $N\ge0$, an arbitrary ordering of the finite set $\mathcal R_N$ is fixed once and for all.
\end{enumerate}
We shall call any sequence $\mathcal R=\{\mathcal R_N\}_{N\ge0}$ of decompositions of $\D$ satisfying the above properties a \emph{$b$-admissible dyadic resolution}. The preceding construction shows that the class of $b$-admissible dyadic resolutions is nonempty. For simplicity, throughout the rest of this paper we fix $\mathcal R=\{\mathcal R_N\}_{N\ge0}$ to be the polar dyadic resolution constructed above.

We record two auxiliary estimates that will be used in the proof of Lemma~\ref{lem:Eb}.

\begin{lemma}[{\cite[Lemma 2.4]{HLXZ}}]\label{lem:tail-estimate}
Let $J\subseteq\T$ be an arc. If $J(n)\subseteq\T$ is the arc concentric with $J$ whose
length satisfies
\begin{equation*}
\abs{J(n)}\asymp\min\{2^n\abs J,1\},
\end{equation*}
then for every $b\gg1$,
\begin{equation*}
\int_{Q_{J(n+1)}}\frac{(1-\abs w^2)^{2b}}{K\bigl((1-\abs w)/\abs{J(n+1)}\bigr)}\,dA(w)
\lesssim\abs{J(n+1)}^{2b+2}\asymp\bigl(\min\{2^n\abs J,1\}\bigr)^{2b+2}.
\end{equation*}
\end{lemma}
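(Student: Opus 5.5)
Write $I=J(n+1)$ and $h=\abs I$. The plan is to compare $K$ with a power using condition (K3), and then integrate directly over the Carleson box $Q_I$.

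\emph{Power bound for $1/K$.} The defining property of $\varphi_K$ gives, for $0<s\le 1$ and $0<t\le1$,
\[
K(st)\le \varphi_K(s)K(t).
\]
Taking $t=1$ yields $K(s)\le\varphi_K(s)K(1)$, which bounds $K$ from above; we need a lower bound. Set $x=(1-\abs w)/h$, so that $x\in(0,1]$ on $Q_I$. Since $K$ is nondecreasing, $1/K(x)$ is large only as $x\to0$. The standard consequence of (K3) is that $\int_0^1\varphi_K(s)\,ds/s<\infty$ forces $\varphi_K(s)\lesssim s^{c}$ for some $c>0$ (the usual lemma in \cite{EWX}). Applying $K(xt)\le\varphi_K(x)K(t)$ with $t=1$ gives only $K(x)\lesssim x^c$, again an upper bound, so this route fails. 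The lower bound should instead come from doubling (K4). Iterating $K(2t)\le C_KK(t)$ gives $K(t)\ge C_K^{-m}K(2^mt)$, so for $t\in(0,1]$,
\[
K(t)\gtrsim t^{\gamma}K(1),\qquad \gamma=\log_2 C_K .
\]
Hence $1/K(x)\lesssim x^{-\gamma}$ on $Q_I$.

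\emph{Integration over the box.} Polar coordinates over $Q_I$ contribute an angular factor $h$. Writing $1-\abs w\approx 1-r$, the radial variable runs over $[0,h]$. The integral is therefore bounded by
\[
h\int_0^h (2u)^{2b}\Bigl(\frac uh\Bigr)^{-\gamma}du \;\asymp\; h^{\gamma+1}\int_0^h u^{2b-\gamma}\,du \;\asymp\; h^{2b+2},
\]
provided $2b-\gamma>-1$, which holds since $b\gg1$. When $I=\T$, the box is all of $\D$ and the same computation applies with $h\asymp1$.

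\emph{Conclusion.} Finally, $h\asymp\min\{2^{n+1}\abs J,1\}\asymp\min\{2^n\abs J,1\}$, which gives the stated comparison.

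The main obstacle is the lower bound for $K$. Condition (K3) controls $K$ only from above near $0$, so the argument must rely on (K4). The point to check is that (K4) is available on the range $[0,1/2]$ needed for the iteration, which it is since $x\le1$. This is exactly why the estimate requires $b$ large in terms of $C_K$.
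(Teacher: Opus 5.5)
Your proof is correct and follows the paper's route: the paper only cites \cite[Lemma 2.4]{HLXZ} here, but it carries out exactly this computation in Subcase (ii) of the proof of Lemma~\ref{lem:Eb}, rescaling the radial variable by the arc length and using the doubling-based bound $K(u)\ge c_0K(1)u^{\alpha}$, $\alpha=\log_2 C_K$, from Lemma~\ref{lem:general-lower-bound} (your iterated-doubling estimate), with convergence ensured by $2b-\alpha+1>0$. One small correction to your commentary: the second half of (K3) does bound $K$ from below near $0$, because $\varphi_K$ is nondecreasing, the tail integral then forces $\varphi_K(s)\lesssim s^{\sigma}$ for $s\ge1$, and $K(t)\ge K(1)/\varphi_K(1/t)$; your argument does not rely on that remark, so nothing breaks.
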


\begin{lemma} \label{lem:general-lower-bound}
Let $K \in \WQ$. Set $\alpha = \log_2 C_K \ge 0$ and $c_0 = C_K^{-1}$. Then for every $s \in (0, 1]$ and every $\lambda \in (0, 1]$,
\begin{equation}\label{eq:general-scaling-bound}
K(\lambda s) \ge c_0 \, \lambda^\alpha K(s).
\end{equation}
In particular, setting $s = 1$ recovers $K(t) \ge c_0 K(1) t^\alpha$ for all $t \in (0, 1]$.
\end{lemma}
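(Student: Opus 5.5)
The plan is to iterate the doubling condition (K4) dyadically. For $s\in(0,1]$ and $\lambda\in(0,1]$, I choose the integer $m\ge 0$ with $2^{-m-1}<\lambda\le 2^{-m}$. The aim is to compare $K(\lambda s)$ with $K(s)$ by passing through the points $2^{-m}s, 2^{-m+1}s,\dots,s$.

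The first step is the chain of doubling inequalities. For $j=1,\dots,m$, apply (K4) at $t=2^{-j}s$. This is legitimate because $t\le 1/2$, since $s\le 1$ and $j\ge 1$. It gives $K(2^{-j+1}s)\le C_K K(2^{-j}s)$, and chaining these $m$ inequalities yields
\[
K(s)\le C_K^{m}K(2^{-m}s).
\]

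The second step passes from $2^{-m}s$ down to $\lambda s$. We have $\lambda s>2^{-m-1}s$, and $K$ is non-decreasing, so $K(\lambda s)\ge K(2^{-m-1}s)$. Applying (K4) once more at $t=2^{-m-1}s\le 1/2$ gives $K(2^{-m}s)\le C_K K(2^{-m-1}s)$. Combining,
\[
K(s)\le C_K^{m+1}K(\lambda s).
\]

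The third step converts the power of $C_K$ into a power of $\lambda$. Since $2^{-m-1}<\lambda$, we get $C_K^{m}=2^{m\alpha}=(2^{-m})^{-\alpha}\le (2\lambda)^{-\alpha}\cdot 2^{0}$. More directly, $2^{-m}\ge\lambda$ gives $2^{m}\le \lambda^{-1}$, so $C_K^m\le\lambda^{-\alpha}$ because $\alpha\ge0$. Therefore
\[
K(s)\le C_K\,\lambda^{-\alpha}K(\lambda s),
\]
which is exactly $K(\lambda s)\ge c_0\lambda^\alpha K(s)$. The particular case follows by taking $s=1$ and $\lambda=t$.

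The only delicate point is bookkeeping. The extra factor $C_K$ (hence $c_0=C_K^{-1}$) comes from the single extra doubling step that bridges $\lambda s$ to $2^{-m}s$. One must also check that every point at which (K4) is applied lies in $[0,1/2]$, which holds because $s\le 1$. The degenerate case $\lambda=1$ ($m=0$) is trivial, since $c_0\le1$.
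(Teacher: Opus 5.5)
Your argument is correct and is essentially the paper's proof. Both choose the dyadic scale $2^{-m-1}<\lambda\le 2^{-m}$, use monotonicity to drop to $2^{-m-1}s$, apply (K4) $m+1$ times at points in $(0,1/2]$, and convert via $C_K^{-m}=(2^{-m})^{\alpha}\ge\lambda^{\alpha}$; the only difference is that you treat $\lambda\in(1/2,1]$ uniformly as $m=0$, whereas the paper splits it off as a separate case.

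One slip should be deleted: your aside $(2^{-m})^{-\alpha}\le(2\lambda)^{-\alpha}$ has the inequality reversed, since $2^{-m}<2\lambda$. Your ``more direct'' bound $C_K^{m}\le\lambda^{-\alpha}$ is correct, and it is the only one you actually need.
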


\begin{proof}
Fix $s \in (0, 1]$ and $\lambda \in (0, 1]$. We divide the argument into two cases according to the dyadic scale of $\lambda$:

\medskip
\noindent\textbf{Case 1: $\lambda \in (1/2, 1]$.}  
Since $\lambda > 1/2$, we have $\lambda s \ge s/2$. By the monotonicity of $K$,
\begin{equation*}
K(\lambda s) \ge K(s/2).
\end{equation*}
Since $s \in (0, 1]$, we have $t := s/2 \in (0, 1/2]$. Applying the doubling condition (K4) to $t$ gives
\begin{equation*}
K(s) = K(2t) \le C_K K(t) = C_K K(s/2) \implies K(s/2) \ge C_K^{-1} K(s).
\end{equation*}
Because $\lambda \le 1$ and $\alpha \ge 0$, we have $\lambda^\alpha \le 1$. Therefore,
\begin{equation*}
K(\lambda s) \ge K(s/2) \ge C_K^{-1} K(s) \ge C_K^{-1} \lambda^\alpha K(s) = c_0 \, \lambda^\alpha K(s).
\end{equation*}

\medskip
\noindent\textbf{Case 2: $\lambda \in (0, 1/2]$.}  
Choose the unique positive integer $n \ge 1$ such that
\begin{equation*}
2^{-(n+1)} < \lambda \le 2^{-n}.
\end{equation*}
By the monotonicity of $K$,
\begin{equation}\label{eq:mono-step}
K(\lambda s) \ge K\bigl(2^{-(n+1)} s\bigr).
\end{equation}
For each integer $k \in \{0, 1, \dots, n\}$, since $s \le 1$, we have $2^{-(k+1)} s \le 2^{-1} = 1/2$. Applying condition (K4) with $t = 2^{-(k+1)} s \in (0, 1/2]$ gives
\begin{equation*}
K\bigl(2^{-(k+1)} s\bigr) \ge C_K^{-1} K\bigl(2 \cdot 2^{-(k+1)} s\bigr) = C_K^{-1} K\bigl(2^{-k} s\bigr).
\end{equation*}
Iterating this reverse doubling estimate $n+1$ times leads to
\begin{equation*}
K\bigl(2^{-(n+1)} s\bigr) \ge C_K^{-(n+1)} K(s).
\end{equation*}
Using $C_K^{-(n+1)} = C_K^{-1} C_K^{-n} = C_K^{-1} (2^{-n})^{\log_2 C_K} = C_K^{-1} (2^{-n})^\alpha$ and the fact that $\lambda \le 2^{-n}$ implies $(2^{-n})^\alpha \ge \lambda^\alpha$, we obtain
\begin{equation*}
C_K^{-(n+1)} = C_K^{-1} (2^{-n})^\alpha \ge C_K^{-1} \lambda^\alpha = c_0 \, \lambda^\alpha.
\end{equation*}
Combining this with \eqref{eq:mono-step} gives
\begin{equation*}
K(\lambda s) \ge K\bigl(2^{-(n+1)} s\bigr) \ge c_0 \, \lambda^\alpha K(s).
\end{equation*}
Combining Case 1 and Case 2 completes the proof of \eqref{eq:general-scaling-bound}.
\end{proof}

We now prove the uniform boundedness of $E^{(b)}_N$ on $T_K$. This is the key technical
ingredient of the paper, and the argument follows \cite[Section 2]{HLXZ}.

\begin{lemma} \label{lem:Eb}
Let $K \in \WQ$ and let $b\gg1$. Then
\begin{equation*}
\norm{E^{(b)}_NF}_{T_K}\lesssim\norm{F}_{T_K},\qquad N\ge0.
\end{equation*}
\end{lemma}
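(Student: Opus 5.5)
Fix an arc $I\subseteq\T$ and write $G=E^{(b)}_NF$. We must bound
\[
\int_{Q_I}\abs{G(z)}^2K\Bigl(\frac{1-\abs z}{\abs I}\Bigr)\,dA(z)\lesssim\norm{F}_{T_K}^2 .
\]
On each tile $R\in\mathcal R_N$ the function $G$ is the constant $\frac1{A_b(R)}\int_R F\,dA_b$. Since $A_b(R)\asymp 2^{-2N}(1-j/2^N)^b$ and $(1-\abs w^2)^b$ is essentially constant on tiles away from the top layer, Cauchy--Schwarz gives
\[
\abs{G}^2\mathbf 1_R\lesssim\frac{1}{A_b(R)}\int_R\abs F^2\,dA_b ,
\]
so $\abs G^2\le E^{(b)}_N(\abs F^2)$ pointwise. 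The difficulty is that the weight $K((1-\abs z)/\abs I)$ is not comparable on a tile to its values elsewhere when the tile is large compared with $Q_I$. Tiles near the origin, where $j/2^N$ is small, are also large. So I split according to the size of tiles meeting $Q_I$.

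\textbf{Case A: tiles small relative to $I$.} Here the tile has angular and radial size $\lesssim\abs I$. Every tile meeting $Q_I$ lies in $Q_{\tilde I}$ for a fixed dilate $\tilde I$ of $I$. On such a tile, $1-\abs z$ and $1-\abs w$ are comparable whenever the tile is not in the top radial layer (the layer near $\T$). For the top layer, where $j=2^N-1$, I use the weighted average with large $b$, together with the doubling estimate of Lemma~\ref{lem:general-lower-bound}, to control the ratio $K((1-\abs z)/\abs I)/K((1-\abs w)/\abs I)$ after integration. Summing over the tiles then gives the bound $\lesssim\int_{Q_{\tilde I}}\abs F^2K((1-\abs w)/\abs{\tilde I})\,dA$, which is $\lesssim\norm F_{T_K}^2$ by doubling.

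\textbf{Case B: large tiles.} A tile $R$ meeting $Q_I$ may be much larger than $I$, or $\abs I$ may be comparable to $1$ (coarse $N$). In this case I estimate the single averaged constant by Cauchy--Schwarz against the weight $K$. Let $J(n)$ be the dilate of $I$ with $\abs{J(n)}\asymp\min\{2^n\abs I,1\}$ that contains $R$. Then
\[
\Bigl|\int_RF\,dA_b\Bigr|^2\le\int_{Q_{J(n+1)}}\abs F^2K\Bigl(\frac{1-\abs w}{\abs{J(n+1)}}\Bigr)dA
\cdot\int_{Q_{J(n+1)}}\frac{(1-\abs w^2)^{2b}}{K((1-\abs w)/\abs{J(n+1)})}\,dA .
\]
The first factor is at most $\norm F_{T_K}^2$. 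By Lemma~\ref{lem:tail-estimate} the second factor is $\lesssim\abs{J(n+1)}^{2b+2}$. Since $A_b(R)\asymp\abs{J(n)}^{2+b}$ roughly, $\abs G^2\lesssim\norm F_{T_K}^2\,\abs{J(n)}^{2b+2}/A_b(R)^2$, which is bounded by a constant. It remains to integrate this bound over $R\cap Q_I$ against $K((1-\abs z)/\abs I)$. Condition (K2), after scaling, gives $\int_{Q_I}K((1-\abs z)/\abs I)\,dA\lesssim\abs I^2$. The resulting sum over $n$ converges because of the $\abs I^2/\abs{J(n)}^2$-type decay, using condition (K3) with $\sigma$ small compared to $b$.

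\textbf{Main obstacle.} The hardest step is the bookkeeping in Case B, specifically matching the power $\abs{J}^{2b+2}$ against $A_b(R)^2$ uniformly for tiles near the origin versus near the boundary. I would first try the crude pointwise bound above and absorb any loss using the $\varphi_K$ integrability in (K3).
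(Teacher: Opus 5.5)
Your scale split ($2^{-N}\lesssim\abs I$ versus $2^{-N}\gtrsim\abs I$) works, and it is a genuinely different route from the paper's. Case B has a slip you should fix. The quantity $\norm{F}_{T_K}^2\abs{J(n)}^{2b+2}/A_b(R)^2$ is not bounded by a constant: with $A_b(R)\asymp\abs{J(n)}^{b+2}$ it is $\asymp\norm{F}_{T_K}^2\abs{J(n)}^{-2}\asymp\norm{F}_{T_K}^2\,2^{2N}$.

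What closes the case is the next step you state. Monotonicity of $K$ gives $\int_{Q_I}K((1-\abs z)/\abs I)\,dA\le 2K(1)\abs I^2$; (K2) is not needed. Hence $\int_{Q_I}\abs G^2K((1-\abs z)/\abs I)\,dA\lesssim(2^N\abs I)^2\norm{F}_{T_K}^2\lesssim\norm{F}_{T_K}^2$. There is no sum over $n$, and (K3) plays no role beyond what is already inside Lemma~\ref{lem:tail-estimate}.

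Your worry about tiles near the origin also dissolves. A tile meeting $Q_I$ contains a point with $1-\abs z^2\le 2\abs I\lesssim 2^{-N}$, and each radial layer has width $2^{-N}$ in $1-r^2$. So in Case B the tile lies in the top $O(1)$ layers. Consequently:
\begin{itemize}
\item $1-\abs\zeta^2\lesssim 2^{-N}$ on $R$;
\item $A_b(R)\ge 2^{-(b+2)N}$, since the top layer carries the smallest weight;
\item $R\subset Q_J$ for an arc $J$ with $\abs J\asymp\min\{2^{-N},1\}$;
\item all such tiles have the same scale, so only one $n$ occurs.
\end{itemize}
Also, $\abs G^2\le E^{(b)}_N(\abs F^2)$ holds exactly by Jensen for the probability measure $dA_b/A_b(R)$ on each tile, so no comparability of $(1-\abs w^2)^b$ is needed. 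In the top layer of Case A this combines with $u^bK(s)\le u^\alpha K(s)\le c_0^{-1}K(us)$, where $u=2^N(1-\abs w^2)\le1$ and $s=2^{-N}/\abs I\le1$. That inequality is the precise form of the ``ratio control'' you describe.

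The paper instead fixes $J$ and decomposes the function, $F=F\mathbf 1_{Q_{J(2)}}+\sum_{n\ge2}F\mathbf 1_{Q_{J(n+1)}\setminus Q_{J(n)}}$, uniformly in $N$. The local piece is handled by interior tiles (radial comparability) and top-layer tiles (Cauchy--Schwarz against $K$ and $1/K$, with subcases $2^{-N}\le\abs J$ and $2^{-N}>\abs J$). Each far piece gets a pointwise bound, based on the fact that a tile joining $Q_J$ to the complement of $Q_{J(n)}$ must have $2^{-N}\gtrsim\abs{J(n)}$. The paper then applies Minkowski's inequality and sums the geometric series $\sum_n 2^{-n}$.

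Your organization makes that far-field analysis unnecessary. When $2^{-N}\lesssim\abs I$, the averages on $Q_I$ only see $F$ on a fixed dilate $Q_{\tilde I}$. When $2^{-N}\gtrsim\abs I$, one crude pointwise bound suffices; it is the same Lemma~\ref{lem:tail-estimate} estimate the paper applies to its tails. So your route is shorter. The paper's follows the standard Fefferman--Stein local/far template, and its tail argument (together with its subcase $2^{-N}>\abs J$) is essentially a longer version of your Case B.
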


\begin{proof}
The case $N=0$ is immediate, because $E_0^{(b)}F$ is a constant function and the embedding chain of Lemma~\ref{lem:embedding} applies; hence we assume $N \ge 1$. By scaling homogeneity, we normalize $\|F\|_{T_K} = 1$. It suffices to show that for every boundary arc $J \subseteq \T$,
\begin{equation}\label{eq:target_integral_J_norm}
\int_{Q_J} |E_N^{(b)} F(z)|^2 K\left(\frac{1-|z|}{|J|}\right) dA(z) \lesssim 1,
\end{equation}
with an implicit constant independent of both $J$ and $N$.

To prove \eqref{eq:target_integral_J_norm}, we implement a dyadic ``local--far'' geometric decomposition. This classical paradigm traces back to the foundational work of Fefferman and Stein \cite{FS} on the $H^p$--$\mathrm{BMO}$ duality and conformal invariance of $\mathrm{BMO}$; see also Girela's lecture notes \cite[Theorem~3.1]{Girela}. Specifically, for each $n \ge 0$, let $J(n) \subseteq \T$ denote the concentric arc with length $|J(n)| \approx \min\{2^n |J|, 1\}$ (setting $J(m)=\T$ whenever $J(n)=\T$ for all $m\ge n$). We split $F$ into the local core component $F_0$ and the off-diagonal geometric slices $F_n$ ($n \ge 2$):
\begin{equation*}
F_0 := F \mathbf{1}_{Q_{J(2)}}, \qquad F_n := F \mathbf{1}_{Q_{J(n+1)} \setminus Q_{J(n)}} \quad (n \ge 2).
\end{equation*}

Fix an arbitrary boundary arc $J \subseteq \T$. For each integer $n \ge 0$, let $J(n) \subseteq \T$ denote the arc concentric with $J$ whose length satisfies
\begin{equation*}
|J(n)| \approx \min\{2^n |J|, 1\}.
\end{equation*}
Whenever $J(n) = \T$, we fix $J(m) = \T$ for all $m \ge n$. We partition the function $F$ into the local main component $F_0$ and the off-diagonal geometric slices $F_n$ ($n \ge 2$):
\begin{equation*}
F_0 := F \mathbf{1}_{Q_{J(2)}}, \quad F_n := F \mathbf{1}_{Q_{J(n+1)} \setminus Q_{J(n)}} \quad (n \ge 2).
\end{equation*}
By linearity of $E_N^{(b)}$, $E_N^{(b)} F = E_N^{(b)} F_0 + \sum_{n \ge 2} E_N^{(b)} F_n$. Applying the elementary inequality $|A+B|^2 \le 2|A|^2 + 2|B|^2$, the proof of \eqref{eq:target_integral_J_norm} reduces to establishing:
\begin{align}
\label{eq:goal_local_part} \int_{Q_J} |E_N^{(b)} F_0(z)|^2 K\left(\frac{1-|z|}{|J|}\right) dA(z) &\lesssim 1 \quad \text{\textbf{(Local Term)}}, \\
\label{eq:goal_tail_part} \int_{Q_J} \left| \sum_{n \ge 2} E_N^{(b)} F_n(z) \right|^2 K\left(\frac{1-|z|}{|J|}\right) dA(z) &\lesssim 1 \quad \text{\textbf{(Geometric Tail)}}.
\end{align}

\subsection*{The Local Estimate \eqref{eq:goal_local_part}}
Because the dyadic tiles $\{R\}_{R \in \mathcal R_N}$ form a pairwise disjoint partition of $\D$, $E_N^{(b)} F_0$ is constant on each tile $R$. Expanding the integral over $Q_J$ gives
\begin{align}   \label{eq:p1_tile_sum_expand}
&\int_{Q_J} |E_N^{(b)} F_0(z)|^2 K\left(\frac{1-|z|}{|J|}\right) dA(z) \nonumber \\
= & \sum_{\substack{R \in \mathcal R_N \\ R \cap Q_J \ne \emptyset}}
\Bigl( \frac{1}{A_b(R)} \Bigl| \int_{R \cap Q_{J(2)}} F(w) \, dA_b(w) \Bigr| \Bigr)^2
\int_{R \cap Q_J} K\left(\frac{1-|z|}{|J|}\right) dA(z).
 \end{align} 
We partition the collection of tiles hitting $Q_J$ into interior tiles and boundary tiles:
\begin{equation*}
\mathcal R_N^{\mathrm{int}} := \{R_{N,j,k} \in \mathcal R_N : 0 \le j \le 2^N - 2\}, \qquad \mathcal R_N^{\mathrm{bd}} := \{R_{N, 2^N-1, k} \in \mathcal R_N : 0 \le k \le 2^N - 1\}.
\end{equation*}

\subsubsection*{Case 1.1: Interior Tiles ($R \in \mathcal R_N^{\mathrm{int}}$)}
For any $R = R_{N,j,k}$ with $0 \le j \le 2^N - 2$, the radial coordinates satisfy $\frac{j}{2^N} \le |\zeta|^2 < \frac{j+1}{2^N}$, which gives
\begin{equation*}
(2^N - j - 1)2^{-N} < 1 - |\zeta|^2 \le (2^N - j)2^{-N}, \quad \forall \zeta \in R.
\end{equation*}
Since $j \le 2^N - 2$, we have $2^N - j - 1 \ge 1$, so for all $z, w \in R$:
\begin{equation*}
\frac{1}{2} \le \frac{2^N - j - 1}{2^N - j} \le \frac{1-|z|^2}{1-|w|^2} \le \frac{2^N - j}{2^N - j - 1} \le 2.
\end{equation*}
Consequently, $1-|z|^2 \approx 1-|w|^2 \approx (2^N-j)2^{-N}$ uniformly. By the doubling condition (K4), $K\left(\frac{1-|z|}{|J|}\right) \approx K\left(\frac{1-|w|}{|J|}\right)$ for all $z, w \in R$. Applying the Cauchy--Schwarz inequality with respect to $dA_b$:
\begin{equation*}
\left| \int_{R \cap Q_{J(2)}} F(w) \, dA_b(w) \right|^2 \le A_b(R) \int_R |F_0(w)|^2 \, dA_b(w).
\end{equation*}
Because $A_b(R) = c_b \int_R (1-|\zeta|^2)^b dA(\zeta) \approx (1-|w|^2)^b A(R)$ for any $w \in R$, we obtain
\begin{align}  
 \label{eq:interior_tile_bound}
&\sum_{R \in \mathcal R_N^{\mathrm{int}}} \frac{1}{A_b(R)}
\Bigl( \int_{R \cap Q_J} K\Bigl(\frac{1-|z|}{|J|}\Bigr) dA(z) \Bigr)
\int_R |F_0(w)|^2 \, dA_b(w) \nonumber  \\
\lesssim &\sum_{R \in \mathcal R_N^{\mathrm{int}}} \int_R |F_0(w)|^2 K\Bigl(\frac{1-|w|}{|J|}\Bigr) dA(w).
 \end{align} 

\subsubsection*{Case 1.2: Boundary Tiles ($R \in \mathcal R_N^{\mathrm{bd}}$)}
For $R = R_{N, 2^N-1, k}$, we have $1-2^{-N} \le |\zeta|^2 < 1$. We apply the weighted Cauchy--Schwarz inequality on $R \cap Q_{J(2)}$ with respect to the weight $K\left(\frac{1-|w|}{|J(2)|}\right)$:
\begin{align}\label{eq:p1_bd_CS}
 &\Bigl| \int_{R \cap Q_{J(2)}} F(w) \, dA_b(w) \Bigr|^2 \nonumber \\  
\le & c_b^2 \Bigl( \int_R |F_0(w)|^2 K\Bigl(\frac{1-|w|}{|J(2)|}\Bigr) dA(w) \Bigr)
\Bigl( \int_{R \cap Q_{J(2)}} \frac{(1-|w|^2)^{2b}}{K((1-|w|)/|J(2)|)} dA(w) \Bigr).
\end{align}  
Substituting \eqref{eq:p1_bd_CS} into the boundary sum of \eqref{eq:p1_tile_sum_expand} yields
\begin{equation}\label{eq:p1_bd_multiplier_form}
\sum_{R \in \mathcal R_N^{\mathrm{bd}}} \mathcal{M}(R) \int_R |F_0(w)|^2 K\left(\frac{1-|w|}{|J(2)|}\right) dA(w),
\end{equation}
where the tile multiplier $\mathcal{M}(R)$ is defined by
\begin{equation*}
\mathcal{M}(R) := \frac{c_b^2}{(A_b(R))^2}
\Bigl( \int_{R \cap Q_J} K\Bigl(\frac{1-|z|}{|J|}\Bigr) dA(z) \Bigr)
\Bigl( \int_{R \cap Q_{J(2)}} \frac{(1-|w|^2)^{2b}}{K((1-|w|)/|J(2)|)} dA(w) \Bigr).
\end{equation*}
Recall that $A_b(R) = (b+1) \cdot 2^{-N} \int_{1-2^{-N}}^1 (1-s)^b \, ds = 2^{-(b+2)N}$. We evaluate $\mathcal{M}(R)$ by analyzing the scale relation between $2^{-N}$ and $|J|$:

\medskip
\noindent\emph{Subcase (i): $2^{-N} \le |J|$.}
In this regime, $\frac{2^{-N}}{|J|} \le 1$ and $\frac{2^{-N}}{|J(2)|} \le 1$.
For the area integral of $K$, using $1-|z| \approx 1-|z|^2 = t \in [0, 2^{-N}]$:
\begin{align*} 
 &  \int_{R \cap Q_J} K\left(\frac{1-|z|}{|J|}\right) dA(z)
\le 2^{-N} \int_0^{2^{-N}} K\left(\frac{t}{|J|}\right) dt \\
 = &2^{-2N} \int_0^1 K\left( u \cdot \frac{2^{-N}}{|J|} \right) du
\le 2^{-2N} K\left(\frac{2^{-N}}{|J|}\right).
\end{align*}    
  For the reciprocal weight integral, change variables via $t = 1-|w|^2 = 2^{-N} u$ with $u \in (0, 1]$. Since $u \cdot \frac{2^{-N}}{|J(2)|} \in (0, 1]$, Lemma~\ref{lem:general-lower-bound} yields
\begin{equation*}
K\left( u \cdot \frac{2^{-N}}{|J(2)|} \right) \ge c_0 u^\alpha K\left(\frac{2^{-N}}{|J(2)|}\right).
\end{equation*}
Therefore,
\begin{align*}
\int_{R \cap Q_{J(2)}} \frac{(1-|w|^2)^{2b}}{K\left(\frac{1-|w|}{|J(2)|}\right)} dA(w) &\le 2^{-N} \int_0^{2^{-N}} \frac{t^{2b}}{K\left(\frac{t}{|J(2)|}\right)} \, dt \notag \\
&= 2^{-(2b+2)N} \int_0^1 \frac{u^{2b}}{K\left( u \cdot \frac{2^{-N}}{|J(2)|} \right)} \, du \notag \\
&\le \frac{2^{-(2b+2)N}}{c_0 K\left(\frac{2^{-N}}{|J(2)|}\right)} \int_0^1 u^{2b-\alpha} du \notag \\
&= \frac{2^{-(2b+2)N}}{c_0(2b - \alpha + 1) K\left(\frac{2^{-N}}{|J(2)|}\right)},
\end{align*}
which converges since $b \gg 1$ implies $2b - \alpha + 1 > 0$.
Since $|J| \le |J(2)| \le 4|J|$, condition (K4) implies $$K\left(\frac{2^{-N}}{|J|}\right) \le C_K^2 K\left(\frac{2^{-N}}{|J(2)|}\right).$$
 Thus,
\begin{align}\label{eq:multiplier_subcase1}
\mathcal{M}(R) &\le \frac{(b+1)^2}{(2^{-(b+2)N})^2}
\cdot \Bigl( 2^{-2N} K\Bigl(\frac{2^{-N}}{|J|}\Bigr) \Bigr)
\cdot \Bigl( \frac{2^{-(2b+2)N}}{c_0(2b - \alpha + 1) K(2^{-N}/|J(2)|)} \Bigr) \notag \\
&\le \frac{(b+1)^2 C_K^2}{c_0(2b - \alpha + 1)} \lesssim 1.
\end{align}

\medskip
\noindent\emph{Subcase (ii): $2^{-N} > |J|$.}
In this regime, the integration domain is restricted by the tent depth: on $R \cap Q_J$, $1-|z| \le |J| < 2^{-N}$, so
\begin{align*}
\int_{R \cap Q_J} K\left(\frac{1-|z|}{|J|}\right) dA(z) &\le 2^{-N} \int_0^{|J|} K\left(\frac{t}{|J|}\right) dt = 2^{-N} |J| \int_0^1 K(u) \, du \le 2^{-N} |J| K(1).
\end{align*}
Similarly, on $R \cap Q_{J(2)}$ the radial coordinate satisfies $t = 1-|w| \le |J(2)|$. Setting $t = |J(2)| u$ with $u \in (0, 1]$, Lemma~\ref{lem:general-lower-bound} applies directly to $K(u) \ge c_0 K(1) u^\alpha$ ($u \in (0, 1]$):
\begin{align*}
\int_{R \cap Q_{J(2)}} \frac{(1-|w|^2)^{2b}}{K\left(\frac{1-|w|}{|J(2)|}\right)} dA(w) &\le 2^{-N} \int_0^{|J(2)|} \frac{(2t)^{2b}}{K(t/|J(2)|)} \, dt \notag \\
&= 2^{-N} 2^{2b} |J(2)|^{2b+1} \int_0^1 \frac{u^{2b}}{K(u)} \, du \notag \\
&\le \frac{2^{2b} 4^{2b+1}}{c_0 K(1)(2b - \alpha + 1)} \cdot 2^{-N} |J|^{2b+1}.
\end{align*}
Combining these estimates with $(A_b(R))^2 = 2^{-2(b+2)N}$ gives
\begin{equation}\label{eq:multiplier_subcase2}
\mathcal{M}(R) \le \frac{(b+1)^2 2^{6b+2}}{c_0(2b - \alpha + 1)} \cdot \frac{2^{-2N}|J|^{2b+2}}{2^{-2(b+2)N}} = \frac{(b+1)^2 2^{6b+2}}{c_0(2b - \alpha + 1)} \left( \frac{|J|}{2^{-N}} \right)^{2b+2} \lesssim 1,
\end{equation}
since $\frac{|J|}{2^{-N}} < 1$.

From \eqref{eq:multiplier_subcase1} and \eqref{eq:multiplier_subcase2}, $\mathcal{M}(R) \lesssim 1$ uniformly for all boundary tiles $R \in \mathcal R_N^{\mathrm{bd}}$.

\subsubsection*{Case 1.3: Combining the Tiles}
For interior tiles, since $|J(2)| \le 4|J|$, condition (K4) gives $K\left(\frac{1-|w|}{|J|}\right) \le C_K^2 K\left(\frac{1-|w|}{|J(2)|}\right)$. Summing \eqref{eq:interior_tile_bound} and \eqref{eq:p1_bd_multiplier_form} over all tiles yields
\begin{align*}
&\int_{Q_J} |E_N^{(b)} F_0(z)|^2 K\left(\frac{1-|z|}{|J|}\right) dA(z) \\
\le&  \sum_{R \in \mathcal{R}_N^{\mathrm{int}}} \left( \frac{1}{A_b(R)} \int_{R \cap Q_{J(2)}} |F(w)| \, dA_b(w) \right)^2 \int_{Q_J \cap R} K\left(\frac{1-|z|}{|J|}\right) dA(z) \\
&  + \sum_{R \in \mathcal{R}_N^{\mathrm{bd}}} \left( \frac{1}{A_b(R)} \int_{R \cap Q_{J(2)}} |F(w)| \, dA_b(w) \right)^2 \int_{Q_J \cap R} K\left(\frac{1-|z|}{|J|}\right) dA(z) \\
\lesssim&  \sum_{R \in \mathcal{R}_N^{\mathrm{int}}} \int_R |F_0(w)|^2 K\left(\frac{1-|w|}{|J|}\right) dA(w) + \sum_{R \in \mathcal{R}_N^{\mathrm{bd}}} \int_R |F_0(w)|^2 K\left(\frac{1-|w|}{|J(2)|}\right) dA(w) \\
\lesssim &  \sum_{R \in \mathcal{R}_N} \int_R |F_0(w)|^2 K\left(\frac{1-|w|}{|J(2)|}\right) dA(w) \\
 = &\int_{Q_{J(2)}} |F(w)|^2 K\left(\frac{1-|w|}{|J(2)|}\right) dA(w) \le \|F\|_{T_K}^2 \le 1.
\end{align*}
This completes the proof of \eqref{eq:goal_local_part}.

\subsection*{The Geometric Tail Estimate \eqref{eq:goal_tail_part}}
By Minkowski's integral inequality applied to the $L^2\left(Q_J, K\left(\frac{1-|z|}{|J|}\right) dA(z)\right)$ norm:
\begin{align} \label{eq:p2_minkowski_norm}
\Bigl( \int_{Q_J} \Bigl| \sum_{n \ge 2} E_N^{(b)} F_n(z) \Bigr|^2 K\Bigl( \frac{1-|z|}{|J|} \Bigr) dA(z) \Bigr)^{1/2} \nonumber \\
\le \sum_{n \ge 2} \Bigl( \int_{Q_J} |E_N^{(b)} F_n(z)|^2 K\Bigl( \frac{1-|z|}{|J|} \Bigr) dA(z) \Bigr)^{1/2}.
\end{align} 

Fix $z \in Q_J$, and let $R = R_{N,j,k} \in \mathcal R_N$ be the unique tile containing $z$.
If $R \cap (Q_{J(n+1)} \setminus Q_{J(n)}) = \emptyset$, then $E_N^{(b)} F_n(z) = 0$.
Otherwise, choose $w \in R \cap (Q_{J(n+1)} \setminus Q_{J(n)})$. By definition of $E_N^{(b)}$:
\begin{align*}
|E_N^{(b)} F_n(z)| 
&= \left| \frac{1}{A_b(R)} \int_{R \cap (Q_{J(n+1)} \setminus Q_{J(n)})} F_n(w) \, dA_b(w) \right| \\
&\le \frac{1}{A_b(R)} \int_{R \cap (Q_{J(n+1)} \setminus Q_{J(n)})} |F(w)| \, dA_b(w) \\
&\le \frac{1}{A_b(R)} \int_{R \cap Q_{J(n+1)}} |F(w)| \, dA_b(w).
\end{align*}
Applying the Cauchy--Schwarz inequality with the weight splitting $K^{1/2} \cdot K^{-1/2}$:
\begin{align}\label{eq:p2_CS_decomp}
\int_{R \cap Q_{J(n+1)}} |F(w)| \, dA_b(w) 
&\le c_b \left( \int_{Q_{J(n+1)}} |F(w)|^2 K\left(\frac{1-|w|}{|J(n+1)|}\right) dA(w) \right)^{1/2} \notag \\
&\quad \times \left( \int_{Q_{J(n+1)}} \frac{(1-|w|^2)^{2b}}{K\left(\frac{1-|w|}{|J(n+1)|}\right)} \, dA(w) \right)^{1/2}.
\end{align}
The first factor in \eqref{eq:p2_CS_decomp} is bounded by $\|F\|_{T_K} = 1$. For the second factor, Lemma~\ref{lem:tail-estimate} yields for $b \gg 1$:
\begin{equation}\label{eq:p2_moment_eval}
\left( \int_{Q_{J(n+1)}} \frac{(1-|w|^2)^{2b}}{K\left(\frac{1-|w|}{|J(n+1)|}\right)} \, dA(w) \right)^{1/2} \le C_b |J(n+1)|^{b+1}.
\end{equation}

Since $z \in Q_J$ and $w \in Q_{J(n+1)} \setminus Q_{J(n)}$, we have $\arg(z) \in J$ and $\arg(w) \in \T \setminus J(n)$.
Because both $z$ and $w$ belong to the same polar dyadic tile $R = R_{N,j,k}$, their arguments lie in the angular interval $I_R = \left[ \frac{2\pi k}{2^N}, \frac{2\pi(k+1)}{2^N} \right)$. Therefore, the angular width $\Delta\theta(R) = \frac{2\pi}{2^N}$ spans the angular separation:
\begin{equation*}
2\pi \cdot 2^{-N} = \Delta\theta(R) \ge |\arg(z) - \arg(w)| \ge \operatorname{dist}(J, \T \setminus J(n))=\frac{|J(n)|-|J| }{2}\gtrsim \frac{1}{4} |J(n)|.
\end{equation*}
This forces the dyadic scale to satisfy $2^{-N} \gtrsim |J(n)| \approx |J(n+1)|$, which implies $A(R) = 2^{-2N} \gtrsim |J(n)|^2$.

For every point $\zeta \in R$, since $z \in R \cap Q_J$, we have $1-|z| \le |J| \le C \cdot 2^{-N}$, which locks the radial depth of the entire tile to $1-|\zeta|^2 \approx 2^{-N}$ for all $\zeta \in R^{\mathrm{int}}$. And when $\zeta \in R^{\mathrm{bd}}$, we have $A_b(R) = c_b \int_R (1-|\zeta|^2)^b \, dA(\zeta)=c_b (2^{-N})^{b+2}$. Therefore,
\begin{equation}\label{eq:AbR_exact_derivation}
A_b(R) = c_b \int_R (1-|\zeta|^2)^b \, dA(\zeta) \approx c_b (2^{-N})^b A(R) \gtrsim |J(n+1)|^b |J(n)|^2.
\end{equation}

Substituting \eqref{eq:p2_moment_eval} and \eqref{eq:AbR_exact_derivation} into the pointwise definition, the high-power term $|J(n+1)|^b$ cancels completely:
\begin{equation}\label{eq:p2_pointwise_bound_clean}
|E_N^{(b)} F_n(z)| \lesssim \frac{|J(n+1)|^{b+1}}{|J(n)|^2 |J(n+1)|^b} = \frac{|J(n+1)|}{|J(n)|^2} \approx \frac{1}{\min\{2^n |J|, 1\}}, \quad \forall z \in Q_J.
\end{equation}
Squaring \eqref{eq:p2_pointwise_bound_clean} and integrating against the tent weight:
\begin{align}\label{eq:p2_slice_final_estimate}
&\int_{Q_J} |E_N^{(b)} F_n(z)|^2 K\Bigl(\frac{1-|z|}{|J|}\Bigr) dA(z)\nonumber \\
\lesssim & \frac{1}{(\min\{2^n |J|, 1\})^2} \int_{Q_J} K\Bigl(\frac{1-|z|}{|J|}\Bigr) dA(z)
\lesssim \frac{|J|^2}{(\min\{2^n |J|, 1\})^2},
\end{align}
where the last inequality uses the evaluation:
\begin{align*}
\int_{Q_J} K\left(\frac{1-|z|}{|J|}\right) dA(z) 
&\lesssim |J| \int_{1-|J|}^1 K\left(\frac{1-t}{|J|}\right) dt = |J| \int_0^{|J|} K\left(\frac{u}{|J|}\right) du \\
&= |J|^2 \int_0^1 K(u) \, du \lesssim |J|^2.
\end{align*}
Let $n_0 := \lceil\log_2(1/|J|)\rceil$. For $n > n_0$ we have $2^n |J| \ge 1$, so $J(n) = \T$ and $F_n \equiv 0$. Substituting \eqref{eq:p2_slice_final_estimate} into the Minkowski inequality \eqref{eq:p2_minkowski_norm} gives
\begin{align}\label{eq:p2_final_summation_eval}
\left( \int_{Q_J} \left| \sum_{n \ge 2} E_N^{(b)} F_n(z) \right|^2 K\left(\frac{1-|z|}{|J|}\right) dA(z) \right)^{1/2}
&\le \sum_{n \ge 2} \left( \int_{Q_J} |E_N^{(b)} F_n(z)|^2 K\left(\frac{1-|z|}{|J|}\right) dA(z) \right)^{1/2} \notag \\
&\lesssim \sum_{n \ge 2} \frac{|J|}{\min\{2^n |J|, 1\}} \notag \\
&= \sum_{\substack{n \ge 2 \\ 2^n |J| \le 1}} \frac{|J|}{2^n |J|} + \sum_{\substack{n \ge 2 \\ 2^n |J| > 1}} \frac{|J|}{1} \notag \\
&\lesssim \sum_{2^n |J| \le 1} 2^{-n} + |J| \lesssim 1.
\end{align}
Squaring both sides of \eqref{eq:p2_final_summation_eval} establishes \eqref{eq:goal_tail_part}.

Combining \eqref{eq:goal_local_part} and \eqref{eq:goal_tail_part} completes the proof of Lemma~\ref{lem:Eb}.
\end{proof}

\section{Higher-order weighted Bergman projections and surjectivity}\label{sec:bergman}

\begin{lemma}[{\cite[Lemma 2.3]{HLXZ}}]\label{lem:Bb}
Let $K$ satisfy (K1)--(K4). Then for every $b\gg1$ the weighted Bergman projection $B_b$
acts boundedly on $T_K$, that is,
\begin{equation*}
\norm{B_b\psi}_{T_K}\le C_b\norm{\psi}_{T_K},\qquad \psi\in T_K .
\end{equation*}
\end{lemma}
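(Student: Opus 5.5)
We would prove the boundedness of $B_b$ on $T_K$ by the same local--far decomposition used for $E^{(b)}_N$ in Lemma~\ref{lem:Eb}. Here $B_b\psi(z)=\int_{\D}\frac{\psi(w)}{(1-\bar w z)^{2+b}}\,dA_b(w)$. Normalize $\norm{\psi}_{T_K}=1$, fix an arc $J$, and with the concentric arcs $J(n)$ split
\[
\psi=\psi_0+\sum_{n\ge2}\psi_n,\qquad \psi_0=\psi\mathbf 1_{Q_{J(2)}},\qquad \psi_n=\psi\mathbf 1_{Q_{J(n+1)}\setminus Q_{J(n)}}.
\]
It then suffices to bound $\int_{Q_J}\abs{B_b\psi_0}^2K\bigl(\tfrac{1-\abs z}{\abs J}\bigr)\,dA$ and the corresponding tail by constants independent of $J$.

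\textbf{Local term.} The plan is a Schur test on $L^2\bigl(K(\tfrac{1-\abs w}{\abs{J(2)}})\,dA\bigr)$ restricted to $Q_{J(2)}$. Write $K_J(z)=K\bigl(\frac{1-\abs z}{\abs J}\bigr)$. We view $B_b$ as an integral operator with kernel
\[
\frac{(1-\abs w^2)^b}{\abs{1-\bar w z}^{2+b}}\,K_J(z)^{1/2}K_{J(2)}(w)^{-1/2}
\]
acting on $L^2(dA)$, and test with $h(w)=(1-\abs w^2)^{-\gamma}$ for small $\gamma>0$. The standard Forelli--Rudin estimates give $\int_{\D}\frac{(1-\abs w^2)^{b-\gamma}}{\abs{1-\bar wz}^{2+b}}\,dA(w)\lesssim(1-\abs z^2)^{-\gamma}$ for $0<\gamma<b+1$.

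The weights $K$ are compared through Lemma~\ref{lem:general-lower-bound} together with the upper growth control encoded in (K3). We have $K(\lambda s)\le\varphi_K(\lambda)K(s)$, and (K3) forces $\varphi_K(\lambda)\lesssim\lambda^{\sigma'}$ for small $\lambda$; this comes from the integrability of $\varphi_K(s)/s$ near $0$ combined with the submultiplicativity of $\varphi_K$. Hence the ratio $K_J(z)/K_{J(2)}(w)$ is dominated by powers $\bigl((1-\abs z)/(1-\abs w)\bigr)^{\pm\text{const}}$. These powers are absorbed by choosing $b$ large and $\gamma$ in the appropriate window, which is the same mechanism that produced the convergent integrals $\int_0^1u^{2b-\alpha}\,du$ in Lemma~\ref{lem:Eb}. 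The outcome is
\[
\int_{Q_J}\abs{B_b\psi_0}^2K_J\,dA\lesssim\int_{Q_{J(2)}}\abs\psi^2K_{J(2)}\,dA\le1.
\]

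\textbf{Tail.} For $z\in Q_J$ and $w\in Q_{J(n+1)}\setminus Q_{J(n)}$ we have $\abs{1-\bar wz}\gtrsim\abs{J(n)}$. Hence
\[
\abs{B_b\psi_n(z)}\lesssim\abs{J(n)}^{-2-b}\int_{Q_{J(n+1)}}\abs\psi\,(1-\abs w^2)^b\,dA(w).
\]
Cauchy--Schwarz with the splitting $K^{1/2}\cdot K^{-1/2}$, followed by Lemma~\ref{lem:tail-estimate}, bounds the integral by $\abs{J(n+1)}^{b+1}$. This gives the pointwise bound $\abs{B_b\psi_n(z)}\lesssim\abs{J(n)}^{-1}$. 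Integrating against $K_J$ over $Q_J$, whose total mass is $\lesssim\abs J^2$, gives $\bigl(\int_{Q_J}\abs{B_b\psi_n}^2K_J\bigr)^{1/2}\lesssim\abs J/\min\{2^n\abs J,1\}$. Minkowski's inequality and summation over $n$ then give a bound $\lesssim1$, exactly as in \eqref{eq:p2_final_summation_eval}. The case $J(n)=\T$ is handled trivially, since there $\abs{1-\bar wz}\ge 1-\abs z$ and only finitely many terms occur.

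\textbf{Main obstacle.} The main obstacle is the Schur test in the local term. Because $K$ is only doubling and satisfies (K3) rather than being a power, its two-sided comparison constants must be handled carefully. Both the lower power bound from Lemma~\ref{lem:general-lower-bound} and an upper power bound extracted from (K3) are needed. The parameters $b$ and $\gamma$ must then be chosen so that every Forelli--Rudin integral converges.
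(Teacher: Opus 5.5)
Your proof is correct. The paper itself does not prove this lemma; it quotes it from \cite[Lemma 2.3]{HLXZ}. The natural comparison is therefore with the paper's proof of Lemma~\ref{lem:Eb}, which it says follows the same source, and your argument is that local--far scheme transplanted to $B_b$. It is in fact more transparent for $B_b$ than for $E^{(b)}_N$.

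\begin{itemize}
\item In the tail, the kernel decay $\abs{1-\bar wz}\gtrsim\abs{J(n)}$ for $z\in Q_J$, $w\in Q_{J(n+1)}\setminus Q_{J(n)}$ replaces the tile-width geometry. This bound holds both when $\arg w\notin J(n)$ and when $1-\abs w>\abs{J(n)}$. Lemma~\ref{lem:tail-estimate} then gives $\abs{B_b\psi_n}\lesssim\abs{J(n)}^{-1}$ on $Q_J$, exactly as in \eqref{eq:p2_pointwise_bound_clean}.
\item In the local term, a Schur test replaces the tile-by-tile Cauchy--Schwarz argument.
\end{itemize}

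What your route buys is a self-contained proof that uses only Lemma~\ref{lem:general-lower-bound}, Lemma~\ref{lem:tail-estimate} and standard Forelli--Rudin estimates.

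One simplification: the upper power bound extracted from (K3) is not needed, although your derivation of it via monotonicity and submultiplicativity of $\varphi_K$ is fine. Set $\omega=K\bigl(\frac{1-\abs{\cdot}}{\abs{J(2)}}\bigr)$; on $Q_J$ you may replace $K_J(z)$ by $C_K^2\,\omega(z)$ using doubling. If $1-\abs z\le1-\abs w$, monotonicity of $K$ gives $\omega(z)\le\omega(w)$. Otherwise, Lemma~\ref{lem:general-lower-bound} gives $\omega(z)/\omega(w)\lesssim\bigl(\frac{1-\abs z}{1-\abs w}\bigr)^{\alpha}$. Hence your Schur kernel is dominated by
\[
\frac{(1-\abs w^2)^b}{\abs{1-\bar wz}^{2+b}}+\frac{(1-\abs z^2)^{\alpha/2}(1-\abs w^2)^{b-\alpha/2}}{\abs{1-\bar wz}^{2+b}},
\]
and the Schur test with $h=(1-\abs w^2)^{-\gamma}$ closes for both pieces whenever $0<\gamma<1$ and $b>\alpha/2$.

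Finally, your remark about $\abs{1-\bar wz}\ge1-\abs z$ in the case $J(n)=\T$ is superfluous. Those slices vanish, and the last nonzero slice (where $J(n+1)=\T$ but $J(n)\neq\T$) is already covered by $\abs{1-\bar wz}\gtrsim\abs{J(n)}$.
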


We now introduce a projection adapted to the $Q_K$ setting, which converts derivative
information back into primitive functions. For a function $f\in Q_K$ the embedding
$Q_K\subset\mathcal B$ (see \cite{EWX,WZ}) gives the growth bound
\begin{equation*}
\abs{f'(w)}\lesssim\frac{\norm{f}_{Q_K,*}}{1-\abs w^2}.
\end{equation*}
Consequently, for $b>1$,
\begin{equation}\label{eq:deriv-L2}
\int_{\D}\abs{f'(w)}^2\,dA_b(w)\lesssim\norm{f}_{Q_K,*}^2\int_{\D}(1-\abs w^2)^{b-2}\,dA(w)<\infty,
\end{equation}
so $f'\in A^2(dA_b)$, the weighted Bergman space. Let
\begin{equation*}
k_b(z,w)=\frac1{(1-z\bar w)^{b+2}}
\end{equation*}
denote the reproducing kernel of $A^2(dA_b)$, and let $B_b$ be the associated Bergman
projection,
\begin{equation*}
B_bg(z)=\int_{\D}g(w)\,k_b(z,w)\,dA_b(w).
\end{equation*}
Since $f'\in A^2(dA_b)$, the reproducing property gives $B_bf'=f'$. Integrating along radial
paths and applying Fubini's theorem yields
\begin{equation*}
f(z)-f(0)=\int_0^zf'(\xi)\,d\xi
=\int_0^z(B_bf')(\xi)\,d\xi
=\int_{\D}f'(w)\Bigl(\int_0^zk_b(\xi,w)\,d\xi\Bigr)\,dA_b(w).
\end{equation*}
This computation motivates the following definition.

\begin{definition}\label{def:L}
For a fixed $b\gg1$, the integrated Bergman kernel is
\begin{equation*}
L_b(z,w)=\int_0^zk_b(\xi,w)\,d\xi
=\frac1{\bar w}\Bigl(\frac1{(1-z\bar w)^{b+1}}-1\Bigr)\frac1{b+1},
\end{equation*}
and the reduced weighted Bergman projection is
\begin{equation*}
P_{b,0}\psi(z)=\int_{\D}\psi(w)\,L_b(z,w)\,dA_b(w),
\end{equation*}
defined for measurable functions $\psi$ for which the integral converges.
\end{definition}

The kernel $L_b(\cdot,w)$ is holomorphic on $\D$ for each $w$, and the singularity at $w=0$ is
removable with $L_b(z,0)=z$. The operator $P_{b,0}$ is well defined on $L^1(dA_b)$: for
$z\in\D$,
\begin{equation}\label{eq:P-convergence}
\abs{P_{b,0}\psi(z)}
\le\int_{\D}\abs{\psi(w)}\Bigl(\int_0^{\abs z}\frac{d\abs\xi}{\abs{1-\xi\bar w}^{b+2}}\Bigr)\,dA_b(w)
\le\frac{\abs z}{(1-\abs z)^{b+2}}\,\norm{\psi}_{L^1(dA_b)}.
\end{equation}

\begin{lemma}\label{lem:P-derivative}
For $\psi\in L^1(dA_b)$ we have
\begin{equation*}
P_{b,0}\psi(z)=\int_0^zB_b\psi(\zeta)\,d\zeta,\qquad z\in\D,
\end{equation*}
and consequently
\begin{equation*}
(P_{b,0}\psi)'=B_b\psi,\qquad P_{b,0}\psi(0)=0.
\end{equation*}
\end{lemma}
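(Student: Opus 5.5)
The plan is to justify an interchange of integrals by Fubini's theorem, using the definition $L_b(z,w)=\int_0^z k_b(\xi,w)\,d\xi$ from Definition~\ref{def:L}. Fix $z\in\D$ and parametrize the segment from $0$ to $z$ by $\xi=tz$, $t\in[0,1]$, so that
\begin{equation*}
P_{b,0}\psi(z)=\int_{\D}\psi(w)\Bigl(\int_0^1 k_b(tz,w)\,z\,dt\Bigr)dA_b(w).
\end{equation*}
We will swap the order of the $t$- and $w$-integrations. This gives $\int_0^1 z\,B_b\psi(tz)\,dt=\int_0^z B_b\psi(\zeta)\,d\zeta$, which is the first claim.

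The only real point is the integrability of the combined integrand on $\D\times[0,1]$. For $t\in[0,1]$ and $w\in\D$ we have $\abs{1-tz\bar w}\ge 1-\abs z$, hence
\begin{equation*}
\abs{\psi(w)}\,\abs{k_b(tz,w)}\,\abs z\le \frac{\abs z}{(1-\abs z)^{b+2}}\abs{\psi(w)}.
\end{equation*}
The right-hand side is integrable with respect to $dA_b\otimes dt$ because $\psi\in L^1(dA_b)$; this is exactly the estimate \eqref{eq:P-convergence}. Fubini--Tonelli therefore applies. The same bound shows that $B_b\psi(\zeta)$ is well defined for every $\zeta\in\D$, and that $\abs{B_b\psi(\zeta)}\le(1-\abs\zeta)^{-b-2}\norm{\psi}_{L^1(dA_b)}$.

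Next we check that $B_b\psi$ is holomorphic on $\D$. On a disc $\abs\zeta\le r<1$ the kernel $k_b(\zeta,w)$ is holomorphic in $\zeta$ and bounded by $(1-r)^{-b-2}$ uniformly in $w$. Holomorphy then follows from Morera's theorem combined with Fubini, or equivalently from differentiation under the integral sign justified by dominated convergence. Consequently $\zeta\mapsto\int_0^\zeta B_b\psi$ is the path-independent primitive of $B_b\psi$. It follows that $(P_{b,0}\psi)'=B_b\psi$, and $P_{b,0}\psi(0)=0$ is immediate since the integral is empty at $z=0$.

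There is one mild subtlety: $w=0$, where the closed formula for $L_b$ has a removable singularity. This is harmless, because we work throughout with the integral definition $\int_0^z k_b(\xi,w)\,d\xi$, which is continuous in $w$. No step is a serious obstacle; the essential point is the uniform kernel bound on compact subsets of $\D$.
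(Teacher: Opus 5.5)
Your proof is correct and is essentially the paper's argument. The paper differentiates under the integral sign using $\partial_zL_b(z,w)=k_b(z,w)$ and then integrates back from $P_{b,0}\psi(0)=0$, whereas you first interchange the $t$- and $w$-integrals by Fubini and then use holomorphy of $B_b\psi$ for the derivative identity; both rest on the same locally uniform kernel bound $\abs{k_b(\xi,w)}\le(1-\abs z)^{-(b+2)}$ for $\abs\xi\le\abs z$ that underlies \eqref{eq:P-convergence}, which your write-up states more explicitly than the paper does.
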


\begin{proof}
Differentiating $L_b(\cdot,w)$ with respect to the first variable gives
$\partial_zL_b(z,w)=k_b(z,w)$. Interchanging differentiation and integration, which is
justified by \eqref{eq:P-convergence}, yields the claim.
\end{proof}

To justify applying Lemma~\ref{lem:P-derivative} to functions in $T_K$, we establish the
embedding chain $T_K\hookrightarrow L^2(dA_b)\hookrightarrow L^1(dA_b)$ for large $b$.

\begin{lemma} \label{lem:embedding}
Let $K \in \WQ$ and let $\alpha=\log_2C_K$. For $b$ sufficiently large (in particular, for $b\gg1$), we have $b>\alpha$,
then
\begin{equation*}
T_K\hookrightarrow L^2(dA_b)\hookrightarrow L^1(dA_b),
\end{equation*}
and there is a constant $C>0$ such that for every $\psi\in T_K$,
\begin{equation}\label{eq:chain-norm}
\norm{\psi}_{L^1(dA_b)}\le\norm{\psi}_{L^2(dA_b)}\le C\norm{\psi}_{T_K}.
\end{equation}
\end{lemma}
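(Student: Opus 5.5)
The first inequality in \eqref{eq:chain-norm} is immediate. Since $dA_b$ is a probability measure ($c_b=b+1$ normalizes $\int_{\D}(1-\abs w^2)^b\,dA(w)$ to $1$), Cauchy--Schwarz gives $\norm{\psi}_{L^1(dA_b)}\le\norm{\psi}_{L^2(dA_b)}$. All the work is therefore in proving $\norm{\psi}_{L^2(dA_b)}\lesssim\norm{\psi}_{T_K}$. My plan is to decompose $\D$ into dyadic annuli, control each annulus by tent-norm tests on arcs of matching length, and use the weight $(1-\abs w^2)^b$ to beat the possible degeneracy of $K$ near $0$.

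For $m\ge0$ let $A_m=\{w:2^{-m-1}<1-\abs w\le 2^{-m}\}$. Cover $A_m$ by the Carleson boxes $Q_{I}$, where $I$ runs over the $2^m$ dyadic arcs of length $\abs I=2^{-m}$. On $Q_I\cap A_m$ we have $(1-\abs w)/\abs I\in(1/2,1]$, so monotonicity and (K4) give $K\bigl((1-\abs w)/\abs I\bigr)\ge K(1/2)\ge C_K^{-1}K(1)>0$. For $m=0$, use $Q_{\T}=\D$ together with the bound $K((1-\abs w))\ge K(1/2)$ on $A_0$.

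Testing the definition \eqref{eq:tent-norm} on each such $I$ then gives
\begin{equation*}
\int_{Q_I\cap A_m}\abs{\psi}^2\,dA\le \frac{C_K}{K(1)}\norm{\psi}_{T_K}^2 .
\end{equation*}
Summing over the $2^m$ arcs, we get $\int_{A_m}\abs\psi^2\,dA\lesssim 2^m\norm{\psi}_{T_K}^2$. Multiplying by the weight, which satisfies $(1-\abs w^2)^b\lesssim 2^{-mb}$ on $A_m$, and summing over $m$ yields
\begin{equation*}
\norm{\psi}_{L^2(dA_b)}^2\lesssim c_b\sum_{m\ge0}2^{m(1-b)}\norm{\psi}_{T_K}^2,
\end{equation*}
which converges once $b>1$.

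Lemma~\ref{lem:general-lower-bound} is not actually needed in this argument, since $K$ is only evaluated at ratios in $[1/2,1]$. The hypothesis $b>\alpha$ is harmless because it holds for $b\gg1$. Alternatively, one could test each arc at the scale $2^{-m}$ and invoke $K(t)\ge c_0K(1)t^\alpha$ to handle points lying deeper than the box, which would use $b>\alpha$. I will present the simpler route.

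The only real point of care is the covering step. The boxes $Q_I\cap A_m$ for the $2^m$ arcs of length $2^{-m}$ must exhaust $A_m$, which holds because $Q_I$ contains all points of modulus at least $1-\abs I$ over the arc $I$. The counting must also give exactly the factor $2^m$ with a constant independent of $m$, since this factor is what the weight must absorb. Everything else is routine bookkeeping.
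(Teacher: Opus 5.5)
Your argument is correct, but it takes a different route from the paper. The paper tests the tent norm only on the single arc $I=\T$, which gives $\int_{\D}\abs{\psi}^2K(1-\abs z)\,dA\le\norm{\psi}_{T_K}^2$. It then handles the possible degeneracy of $K(1-\abs z)$ near the boundary with the power lower bound $K(t)\ge c_0K(1)t^{\alpha}$ from Lemma~\ref{lem:general-lower-bound}. This makes $(1-\abs z^2)^b/K(1-\abs z)$ uniformly bounded for $b>\alpha$. You instead use the supremum over dyadic arcs at every scale, so $K$ is only ever evaluated at ratios in $(1/2,1]$. That needs only monotonicity and one application of (K4). The price is the counting factor $2^m$ from the $2^m$ boxes at scale $m$, which the weight bound $(1-\abs w^2)^b\lesssim 2^{-mb}$ absorbs once $b>1$. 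The paper's route is shorter and proves a slightly stronger fact: the single global quantity $\int_{\D}\abs{\psi}^2K(1-\abs z)\,dA$ already controls $\norm{\psi}_{L^2(dA_b)}$. Its threshold $b>\alpha=\log_2C_K$, however, depends on the weight. Your route gives the $K$-independent threshold $b>1$ and avoids the iterated doubling lemma, at the cost of using the Carleson-box structure of $T_K$ across all scales. Since the paper takes $b\gg1$, either threshold suffices.
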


\begin{proof}
Taking $I=\T$ in \eqref{eq:tent-norm} gives
\begin{equation*}
\int_{\D}\abs{\psi(z)}^2K(1-\abs z)\,dA(z)\le\norm{\psi}_{T_K}^2.
\end{equation*}
Using $1-\abs z^2=(1-\abs z)(1+\abs z)\le2(1-\abs z)$ and Lemma~\ref{lem:general-lower-bound} with
$t=1-\abs z$, we get
\[
\frac{(1-\abs z^2)^b}{K(1-\abs z)}
\le\frac{2^b(1-\abs z)^b}{K(1-\abs z)} \le\frac{2^b}{c_0K(1)}\,(1-\abs z)^{b-\alpha}
\le\frac{2^b}{c_0K(1)},
\]
because $b>\alpha$. Therefore
\begin{align*}
\|\psi\|_{L^2(dA_b)}^2 
&= C_b \int_{\mathbb{D}} |\psi(z)|^2 K(1-|z|) \cdot \frac{(1-|z|^2)^b}{K(1-|z|)} \, dA(z) \\
&\le C_b \cdot \sup_{z \in \mathbb{D}} \frac{(1-|z|^2)^b}{K(1-|z|)} \int_{\mathbb{D}} |\psi(z)|^2 K(1-|z|) \, dA(z) \\
&\le C_b \cdot \frac{2^b}{C_0 K(1)} \int_{\mathbb{D}} |\psi(z)|^2 K(1-|z|) \, dA(z) \lesssim \|\psi\|_{T_K}^2.
\end{align*}
Since $dA_b$ is a probability measure, with total mass
\begin{equation*}
A_b(\D)=c_b\int_0^1(1-r^2)^b\,2r\,dr=\frac{c_b}{b+1}=1,
\end{equation*}
the Cauchy--Schwarz inequality gives
$\norm{\psi}_{L^1(dA_b)}\le\norm{\psi}_{L^2(dA_b)}$. This proves
\eqref{eq:chain-norm}.
\end{proof}

\begin{proposition}\label{prop:surjective}
Let $K \in \WQ$ and fix $b\gg1$. The map
\begin{equation*}
P_{b,0}\colon T_K\to\ringQ
\end{equation*}
is a bounded and surjective linear operator. More precisely,
\begin{equation*}
\norm{P_{b,0}\psi}_{Q_K,*}\le C\norm{\psi}_{T_K},\qquad \psi\in T_K,
\end{equation*}
and every $f\in\ringQ$ can be written as $f=P_{b,0}\psi$ with $\psi=f'$, for which
$\norm{\psi}_{T_K}\asymp\norm{f}_{Q_K,*}$.
\end{proposition}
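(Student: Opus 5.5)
The plan is to read Proposition~\ref{prop:surjective} off the lemmas already established: the embedding of Lemma~\ref{lem:embedding}, the derivative formula of Lemma~\ref{lem:P-derivative}, the $T_K$-boundedness of $B_b$ from Lemma~\ref{lem:Bb}, and the identification \eqref{eq:QK-tent}.

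\textbf{Boundedness.} Take $\psi\in T_K$. By Lemma~\ref{lem:embedding}, $\psi\in L^1(dA_b)$ with $\norm{\psi}_{L^1(dA_b)}\lesssim\norm{\psi}_{T_K}$. The estimate \eqref{eq:P-convergence} then shows that $P_{b,0}\psi$ is a well-defined function on $\D$. Each kernel $L_b(\cdot,w)$ is holomorphic, and the bound is locally uniform in $z$. Hence $P_{b,0}\psi\in H(\D)$ by dominated convergence (or by Morera's theorem).

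Lemma~\ref{lem:P-derivative} then gives $P_{b,0}\psi(0)=0$ and $(P_{b,0}\psi)'=B_b\psi$. Using \eqref{eq:QK-tent} and Lemma~\ref{lem:Bb},
\begin{equation*}
\norm{P_{b,0}\psi}_{Q_K,*}\asymp\norm{B_b\psi}_{T_K}\le C_b\norm{\psi}_{T_K}.
\end{equation*}
So $P_{b,0}\psi\in\ringQ$, and the map $P_{b,0}\colon T_K\to\ringQ$ is bounded and linear.

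\textbf{Surjectivity.} Take $f\in\ringQ$ and set $\psi=f'$. By \eqref{eq:QK-tent}, $\psi\in T_K$ with $\norm{\psi}_{T_K}\asymp\norm{f}_{Q_K,*}$. Two facts give the identity $f=P_{b,0}\psi$:
\begin{itemize}
\item by \eqref{eq:deriv-L2}, $f'\in A^2(dA_b)$, so the reproducing property gives $B_bf'=f'$;
\item by Lemma~\ref{lem:P-derivative}, $P_{b,0}f'(z)=\int_0^z B_bf'(\zeta)\,d\zeta=\int_0^z f'(\zeta)\,d\zeta=f(z)-f(0)=f(z)$.
\end{itemize}
This is exactly the Fubini computation that preceded Definition~\ref{def:L}.

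\textbf{Main obstacle.} The only delicate point is justifying the interchanges: differentiation under the integral in Lemma~\ref{lem:P-derivative}, and Fubini in the representation of $f$. Both reduce to absolute convergence of $\int_{\D}\abs{\psi(w)}\int_0^{\abs z}\abs{1-\xi\bar w}^{-b-2}\,d\abs\xi\,dA_b(w)$ for each fixed $z$. This follows from $\psi\in L^1(dA_b)$, which Lemma~\ref{lem:embedding} supplies for $b>\alpha$. I would check that this same $b$ is large enough for Lemma~\ref{lem:Bb} to hold, and fix $b$ once to satisfy both requirements.
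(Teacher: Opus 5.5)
Your proposal is correct and follows essentially the same route as the paper. Lemma~\ref{lem:embedding} places $T_K$ inside $L^1(dA_b)$; Lemma~\ref{lem:P-derivative}, Lemma~\ref{lem:Bb} and \eqref{eq:QK-tent} then give boundedness; and \eqref{eq:deriv-L2} with the reproducing property $B_bf'=f'$ gives surjectivity. Your extra remarks, on the holomorphy of $P_{b,0}\psi$ and on fixing one $b$ large enough for both Lemma~\ref{lem:embedding} and Lemma~\ref{lem:Bb}, only make explicit what the paper leaves implicit.
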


\begin{proof}
By Lemma~\ref{lem:embedding} we have $T_K\subset L^2(dA_b)\subset L^1(dA_b)$ for $b\gg1$,
so Lemma~\ref{lem:P-derivative} applies to every $\psi\in T_K$:
\begin{equation}\label{eq:P-deriv-use}
(P_{b,0}\psi)'=B_b\psi,\qquad P_{b,0}\psi(0)=0.
\end{equation}
Combining \eqref{eq:P-deriv-use} with Lemma~\ref{lem:Bb} and the identification
\eqref{eq:QK-tent} yields
\begin{equation*}
\norm{P_{b,0}\psi}_{Q_K,*}\asymp\norm{(P_{b,0}\psi)'}_{T_K}
=\norm{B_b\psi}_{T_K}\le C_b\norm{\psi}_{T_K},
\end{equation*}
so $P_{b,0}$ maps $T_K$ boundedly into $\ringQ$.

For surjectivity, fix $f\in\ringQ$ and set $\psi=f'$. By \eqref{eq:QK-tent},
$\psi\in T_K$ with $\norm{\psi}_{T_K}\asymp\norm{f}_{Q_K,*}$. Since $b\gg1$, the estimate
\eqref{eq:deriv-L2} gives $\psi\in L^2(dA_b)$, and the reproducing property of the Bergman
kernel gives $B_b\psi=\psi$. Hence
\begin{equation*}
P_{b,0}\psi(z)=\int_0^zB_b\psi(\zeta)\,d\zeta
=\int_0^z\psi(\zeta)\,d\zeta=f(z)-f(0)=f(z),
\end{equation*}
since $f(0)=0$. This proves surjectivity.
\end{proof}

We can now define the $Q_K$ packet matrix. Let $\mathcal R=\{\mathcal R_N\}_{N\ge0}$ be the
polar dyadic resolution of $\D$. For each tile $R\in\mathcal R_N$, the associated packet is
\begin{equation*}
\Psi^{(b)}_R(z)=P_{b,0}\mathbf 1_R(z)=\int_RL_b(z,w)\,dA_b(w).
\end{equation*}
For a positive Borel measure $\mu$ on $\D$, the $Q_K$ packet matrix at scale $N$ is defined
entrywise by
\begin{equation*}
\Gamma^{(b)}_{\mu,N}(R_1,R_2)=\int_{\D}\Psi^{(b)}_{R_1}(z)\,\overline{\Psi^{(b)}_{R_2}(z)}\,d\mu(z),
\qquad R_1,R_2\in\mathcal R_N .
\end{equation*}

\section{Boundedness for the $Q_K$--Carleson problem: Part I}\label{sec:boundedness-I}

For a sequence of coefficients $c=\{c_R\}_{R\in\mathcal R_N}$, we introduce the discrete norm
\begin{equation*}
\norm{c}_{X_{K,N}}^2
=\Bigl\lVert\sum_{R\in\mathcal R_N}c_R\,\mathbf 1_R\Bigr\rVert_{T_K}^2
=\sup_{I\subseteq\T}\sum_{R\in\mathcal R_N}\abs{c_R}^2\int_{R\cap Q_I}K\Bigl(\frac{1-\abs z}{\abs I}\Bigr)\,dA(z).
\end{equation*}

\begin{definition}\label{def:capacity}
Let $\mu$ be a positive Borel measure on $\D$. The discrete dyadic capacity associated with the
embedding problem is
\begin{equation*}
C^{(b)}_{K,\mathcal R}(\mu)
=\sup_{N\ge0}\ \sup_{\norm{c}_{X_{K,N}}\le1}
\int_{\D}\Bigl\lvert\sum_{R\in\mathcal R_N}c_R\,\Psi^{(b)}_R(z)\Bigr\rvert^2\,d\mu(z)
=\sup_{N\ge0}\ \sup_{\norm{c}_{X_{K,N}}\le1}c^{*}\Gamma^{(b)}_{\mu,N}c.
\end{equation*}
\end{definition}

For later use we record the optimal embedding constant
\begin{equation}\label{eq:optimal}
C_{\ringQ}(\mu)=\sup\Bigl\{\int_{\D}\abs{f(z)}^2\,d\mu(z):f\in\ringQ,\ \norm{f}_{Q_K,*}\le1\Bigr\},
\end{equation}
so that \eqref{eq:trace} holds with $C=C_{\ringQ}(\mu)$ as the optimal constant.

\begin{theorem}\label{thm:capacity}
Let $K \in \WQ$ and let $\mu$ be a finite positive Borel measure on $\D$. The trace
embedding inequality
\begin{equation}\label{eq:trace-2}
\int_{\D}\abs{f(z)}^2\,d\mu(z)\le C\norm{f}_{Q_K,*}^2,\qquad f\in\ringQ ,
\end{equation}
holds if and only if for any fixed $b\gg1$, we have $C^{(b)}_{K,\mathcal R}(\mu)<\infty$. Moreover,
\begin{equation*}
C^{(b)}_{K,\mathcal R}(\mu)\asymp C_{\ringQ}(\mu).
\end{equation*}
\end{theorem}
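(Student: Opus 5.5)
The plan is to prove the two inequalities $C^{(b)}_{K,\mathcal R}(\mu)\lesssim C_{\ringQ}(\mu)$ and $C_{\ringQ}(\mu)\lesssim C^{(b)}_{K,\mathcal R}(\mu)$ separately. The equivalence of \eqref{eq:trace-2} with finiteness of the capacity then follows, since \eqref{eq:trace-2} holds precisely when $C_{\ringQ}(\mu)<\infty$.

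\emph{Upper bound for the capacity.} Fix $N\ge0$ and coefficients $c$ with $\norm{c}_{X_{K,N}}\le1$. Set $\psi=\sum_{R\in\mathcal R_N}c_R\mathbf 1_R$, so that $\norm{\psi}_{T_K}=\norm{c}_{X_{K,N}}\le1$ by definition. Since $P_{b,0}$ is linear and $\Psi^{(b)}_R=P_{b,0}\mathbf 1_R$, we have $\sum_R c_R\Psi^{(b)}_R=P_{b,0}\psi$. By Proposition~\ref{prop:surjective}, $P_{b,0}\psi\in\ringQ$ with $\norm{P_{b,0}\psi}_{Q_K,*}\le C$. Hence
\[
c^*\Gamma^{(b)}_{\mu,N}c=\int_{\D}\abs{P_{b,0}\psi}^2\,d\mu\le C^2\,C_{\ringQ}(\mu).
\]
Taking the supremum over $c$ and $N$ gives $C^{(b)}_{K,\mathcal R}(\mu)\lesssim C_{\ringQ}(\mu)$.

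\emph{Lower bound.} Fix $f\in\ringQ$ with $\norm{f}_{Q_K,*}\le1$, and set $\psi=f'$. By Proposition~\ref{prop:surjective}, $f=P_{b,0}\psi$ and $\norm{\psi}_{T_K}\lesssim1$. Define $\psi_N=E^{(b)}_N\psi=\sum_R c_R^{(N)}\mathbf 1_R$, where $c_R^{(N)}$ is the $dA_b$-average of $\psi$ over $R$. Lemma~\ref{lem:Eb} gives $\norm{c^{(N)}}_{X_{K,N}}=\norm{\psi_N}_{T_K}\lesssim1$ uniformly in $N$. Consequently $f_N:=P_{b,0}\psi_N=\sum_R c^{(N)}_R\Psi^{(b)}_R$ satisfies $\int\abs{f_N}^2\,d\mu\lesssim C^{(b)}_{K,\mathcal R}(\mu)$. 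It remains to pass to the limit. Lemma~\ref{lem:embedding} gives $\psi\in L^1(dA_b)$, and property (c) gives $\psi_N\to\psi$ in $L^1(dA_b)$. The pointwise estimate \eqref{eq:P-convergence} then yields $f_N(z)\to f(z)$ for every $z\in\D$. Fatou's lemma gives
\[
\int_{\D}\abs f^2\,d\mu\le\liminf_{N\to\infty}\int_{\D}\abs{f_N}^2\,d\mu\lesssim C^{(b)}_{K,\mathcal R}(\mu),
\]
hence $C_{\ringQ}(\mu)\lesssim C^{(b)}_{K,\mathcal R}(\mu)$.

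\emph{Main obstacle.} The substantive difficulty, namely the uniform $T_K$-boundedness of the conditional expectations, is already handled by Lemma~\ref{lem:Eb}, so the remaining risk lies in the limiting argument. Two points need checking there: that the convergence $f_N\to f$ is genuinely pointwise everywhere, which \eqref{eq:P-convergence} provides (with a $z$-dependent but finite constant), and that Fatou's lemma applies without any uniform integrability assumption, which it does because the integrands are nonnegative. I also need every constant to be independent of $N$ and of $\mu$. For that I will track that the constants in Proposition~\ref{prop:surjective} and Lemma~\ref{lem:Eb} depend only on $K$ and $b$.
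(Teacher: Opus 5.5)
Your proposal is correct and follows the paper's proof essentially step for step. You bound $C^{(b)}_{K,\mathcal R}(\mu)\lesssim C_{\ringQ}(\mu)$ by applying Proposition~\ref{prop:surjective} to $\sum_R c_R\mathbf 1_R$, and you get the reverse bound from Lemma~\ref{lem:Eb}, $L^1(dA_b)$-convergence of $E^{(b)}_N\psi$, pointwise convergence of $P_{b,0}E^{(b)}_N\psi$, and Fatou's lemma. The only cosmetic differences are that the paper first proves the estimate for arbitrary $\psi\in T_K$ before specializing to $\psi=f'$, and that it justifies pointwise convergence via boundedness of $L_b(z,\cdot)$ rather than \eqref{eq:P-convergence}; these amount to the same argument.
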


\begin{proof}
First assume that $C^{(b)}_{K,\mathcal R}(\mu)<\infty$. We prove that
\begin{equation}\label{eq:thm-step1}
\int_{\D}\abs{P_{b,0}\psi(z)}^2\,d\mu(z)\le C\,C^{(b)}_{K,\mathcal R}(\mu)\,\norm{\psi}_{T_K}^2,
\qquad\psi\in T_K .
\end{equation}
By homogeneity we may assume $\norm{\psi}_{T_K}\le1$. Write
$E^{(b)}_N\psi=\sum_{R\in\mathcal R_N}a^{(N)}_R\mathbf 1_R$. Lemma~\ref{lem:Eb} gives
\begin{equation*}
\norm{a^{(N)}}_{X_{K,N}}=\norm{E^{(b)}_N\psi}_{T_K}\lesssim\norm{\psi}_{T_K}\le1.
\end{equation*}
By Definition~\ref{def:capacity} and scaling,
\begin{equation}\label{eq:thm-step2}
\int_{\D}\abs{P_{b,0}\bigl(E^{(b)}_N\psi\bigr)(z)}^2\,d\mu(z)
=\int_{\D}\Bigl\lvert\sum_{R\in\mathcal R_N}a^{(N)}_R\Psi^{(b)}_R(z)\Bigr\rvert^2\,d\mu(z)
\lesssim C^{(b)}_{K,\mathcal R}(\mu).
\end{equation}
Since $E^{(b)}_N\psi\to\psi$ in $L^1(dA_b)$, and since for each fixed $z\in\D$ the integrated
kernel $w\mapsto L_b(z,w)$ is bounded and continuous on $\overline\D$, we have the pointwise
convergence $P_{b,0}(E^{(b)}_N\psi)(z)\to P_{b,0}\psi(z)$. Fatou's lemma applied to
\eqref{eq:thm-step2} yields
\begin{equation*}
\int_{\D} \left| P_{b,0}\psi(z) \right|^2 d\mu(z) \le \liminf_{N\to\infty} \int_{\D} \left| P_{b,0}(E_N^{(b)}\psi)(z) \right|^2 d\mu(z) \lesssim C^{(b)}_{K,\mathcal R}(\mu),
\end{equation*}
which establishes \eqref{eq:thm-step1}.

Now fix $f\in\ringQ$. By Proposition~\ref{prop:surjective} there exists $\psi\in T_K$ with
$f=P_{b,0}\psi$ and $\norm{\psi}_{T_K}\lesssim\norm{f}_{Q_K,*}$. Applying \eqref{eq:thm-step1}
gives
\begin{equation*}
\int_{\D}\abs{f(z)}^2\,d\mu(z)\lesssim C^{(b)}_{K,\mathcal R}(\mu)\norm{\psi}_{T_K}^2
\lesssim C^{(b)}_{K,\mathcal R}(\mu)\norm{f}_{Q_K,*}^2,
\end{equation*}
so $C_{\ringQ}(\mu)\lesssim C^{(b)}_{K,\mathcal R}(\mu)$.

Conversely, suppose that \eqref{eq:trace-2} holds. Fix $N\ge0$ and a coefficient vector
$c=\{c_R\}$ with $\norm{c}_{X_{K,N}}\le1$, and set $\psi_c=\sum_{R\in\mathcal R_N}c_R\mathbf 1_R$.
Then $\norm{\psi_c}_{T_K}\le1$, and Proposition~\ref{prop:surjective} gives
$\norm{P_{b,0}\psi_c}_{Q_K,*}\lesssim1$. Hence
\begin{equation*}
\int_{\D}\Bigl\lvert\sum_{R\in\mathcal R_N}c_R\Psi^{(b)}_R(z)\Bigr\rvert^2\,d\mu(z)
=\int_{\D}\abs{P_{b,0}\psi_c(z)}^2\,d\mu(z)\lesssim C_{\ringQ}(\mu).
\end{equation*}
Taking the supremum over all admissible $c$ and all $N\ge0$ gives
$C^{(b)}_{K,\mathcal R}(\mu)\lesssim C_{\ringQ}(\mu)$, which completes the proof.
\end{proof}

\section{Boundedness for the $Q_K$--Carleson problem: Part II}\label{sec:boundedness-II}

For each boundary interval $I\subseteq\T$, define the diagonal configuration matrix
\begin{equation*}
A^{(K)}_{I,N}=\diag\Bigl(
\int_{R\cap Q_I}K\Bigl(\frac{1-\abs z}{\abs I}\Bigr)\,dA(z):R\in\mathcal R_N\Bigr).
\end{equation*}
In this notation the constraint $\norm{c}_{X_{K,N}}\le1$ is the quadratic matrix inequality
\begin{equation}\label{eq:constraint}
c^{*}A^{(K)}_{I,N}c\le1,\qquad I\subseteq\T .
\end{equation}

\begin{definition}\label{def:SDP}
Let $\mu$ be a finite positive Borel measure on $\D$. The dual semidefinite-programming
capacity $D^{(b)}_{K,\mathcal R}(\mu)$ is
\begin{equation}\label{eq:capacity-D}
D^{(b)}_{K,\mathcal R}(\mu)=\sup_{N\ge0}\ \inf\Bigl\{
\sum_{m=1}^M\lambda_m:M\in\N,\ \lambda_m\ge0,\ I_m\subseteq\T,\
\Gamma^{(b)}_{\mu,N}\le\sum_{m=1}^M\lambda_mA^{(K)}_{I_m,N}\Bigr\},
\end{equation}
where the matrix domination is understood in the cone $\mathcal S^{\#\mathcal R_N}_+$ of
Hermitian positive semidefinite matrices. See \cite{VB} for more information.
\end{definition}

\begin{remark}\label{rem:dyadic-arcs}
In \eqref{eq:constraint} and \eqref{eq:capacity-D} it suffices to range over dyadic arcs
$I\subseteq\T$ (arcs of the form $\{e^{i\theta}:2\pi k/2^n\le\theta<2\pi(k+1)/2^n\}$, together
with $\T$ itself). Indeed, every arc $I$ is contained in a dyadic arc $I'$ with
$\abs{I'}\le2\abs I$ and $Q_I\subset Q_{I'}$, and the doubling property gives
\begin{equation*}
\sum_{R\in\mathcal R_N}\abs{c_R}^2\int_{R\cap Q_I}K\Bigl(\frac{1-\abs z}{\abs I}\Bigr)\,dA(z)
\le C_K\sum_{R\in\mathcal R_N}\abs{c_R}^2\int_{R\cap Q_{I'}}K\Bigl(\frac{1-\abs z}{\abs{I'}}\Bigr)\,dA(z).
\end{equation*}
Hence the constraints over all arcs are equivalent to the countable family over dyadic arcs,
and the semidefinite program in \eqref{eq:capacity-D} involves only countably many constraints.
\end{remark}

\begin{theorem}\label{thm:SDP}
Let $K \in \WQ$ and let $\mu$ be a finite positive Borel measure on $\D$. Then
\begin{equation*}
C^{(b)}_{K,\mathcal R}(\mu)\asymp D^{(b)}_{K,\mathcal R}(\mu).
\end{equation*}
\end{theorem}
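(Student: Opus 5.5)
The plan is to prove the equivalence scale by scale: for each fixed $N$ set
\[
C_N=\sup\{c^*\Gamma c:\ c^*A_Ic\le1\ \forall I\},
\qquad
D_N=\inf\Bigl\{\textstyle\sum\lambda_m:\ \Gamma\le\sum\lambda_mA_{I_m}\Bigr\},
\]
with $\Gamma=\Gamma^{(b)}_{\mu,N}$ and $A_I=A^{(K)}_{I,N}$, prove $C_N\le D_N\le c\,C_N$ with an absolute constant $c$, and then take the supremum over $N$. By Remark~\ref{rem:dyadic-arcs} we may restrict to dyadic arcs, at the cost of a factor $C_K$. Moreover, since $\mathcal R_N$ is finite, each $A_I$ depends on $I$ only through finitely many tile integrals. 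Thus the constraint family is effectively a compact set of diagonal matrices, after taking closure, and this will make the duality arguments rigorous.

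\textbf{Weak duality.} If $\Gamma\le\sum\lambda_mA_{I_m}$ and $c^*A_Ic\le1$ for all $I$, then
\[
c^*\Gamma c\le\sum\lambda_m\,c^*A_{I_m}c\le\sum\lambda_m .
\]
Hence $C_N\le D_N$.

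\textbf{Reverse inequality via SDP duality.} Introduce the relaxation
\[
S_N=\sup\{\Tr(\Gamma X):\ X\ge0,\ \Tr(A_IX)\le1\ \forall I\}.
\]
Since each $A_I$ is diagonal, $\Tr(A_IX)=\sum_R (A_I)_{RR}X_{RR}$ depends only on $\diag X$. Conic duality for this semi-infinite SDP should give $S_N=D_N$. Slater's condition holds: $X=\varepsilon\,\mathrm{Id}$ is strictly feasible, because $\sup_I (A_I)_{RR}\lesssim\sup_I |I|^2<\infty$. Strong duality then follows either from a minimax theorem, using compactness of the closed convex hull of $\{A_I\}$, or from Hahn--Banach separation in the finite-dimensional space of Hermitian matrices. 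One subtlety is that the dual infimum ranges over finite combinations. It should be approximable by measures on the compact closure of the constraint set, so the infimum is unchanged; verifying this is routine but requires care.

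\textbf{Rounding via Grothendieck (main obstacle).} It remains to show $S_N\le c\,C_N$. Take a feasible $X$ and factor it as $X_{R,R'}=\inn{v_R}{v_{R'}}$. Write $d_R=\|v_R\|=X_{RR}^{1/2}$ and $u_R=v_R/d_R$. Then
\[
\Tr(\Gamma X)=\sum_{R,R'}\Gamma_{R'R}\,d_Rd_{R'}\inn{u_R}{u_{R'}} .
\]
Let $B=D\Gamma D$ with $D=\diag(d_R)$, so $B\ge0$. The complex Grothendieck inequality gives
\[
\sum B_{R'R}\inn{u_R}{u_{R'}}\le K_G^{\C}\sup_{|z_R|=1}\sum B_{R'R}z_R\bar z_{R'}.
\]
The positive semidefinite version suffices, with constant $4/\pi$. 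For the optimal unimodular $z$, set $c_R=d_Rz_R$. Then
\[
c^*A_Ic=\sum_R (A_I)_{RR}d_R^2=\Tr(A_IX)\le1 ,
\]
because $A_I$ is diagonal. So $c$ is feasible and $c^*\Gamma c\le C_N$, which gives
\[
D_N=S_N\le K_G^{\C}\,C_N .
\]
I expect the delicate points to be two. First, the use of diagonality: this is exactly what makes the constraint invariant under the phase rounding, so that no constraint loss occurs. Second, the justification of strong duality with countably many constraints and the attainment issues; if there is a gap there, I would fall back on the minimax theorem on the compact convex hull. Taking $\sup_N$ completes the proof.
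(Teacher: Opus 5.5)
Your proposal is correct and follows essentially the same route as the paper. It uses weak duality, then strong conic duality for the SDP relaxation, then the Gram factorization $X_{RR'}=\inn{v_R}{v_{R'}}$ followed by Grothendieck rounding with $c_R=\norm{v_R}z_R$, which relies on the fact that the diagonal constraints $\Tr(A^{(K)}_{I,N}X)$ only see $\abs{c_R}^2$. The differences are cosmetic: you put the Slater point on the sup side ($X=\varepsilon\,\mathrm{Id}$) instead of the inf side, you use the positive-semidefinite Grothendieck constant $4/\pi$ directly rather than the general $K_{\C}^G$ plus the PSD reduction, and you are more explicit about the semi-infinite/compactness issue (which does not need the dyadic-arc reduction, since the closure of $\{A^{(K)}_{I,N}\}$ over all arcs is already compact).
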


\begin{proof}
For a fixed scale $N\ge0$, set
\begin{align}\label{eq:local-C}
C^{(b)}_{K,\mathcal R,N}(\mu) &= \sup_{\norm{c}_{X_{K,N}}\le1}c^{*}\Gamma^{(b)}_{\mu,N}c, \notag \\
\delta^{(b)}_{K,\mathcal R,N}(\mu) &= \inf\Bigl\{
\sum_{m=1}^M\lambda_m:\lambda_m\ge0,\ I_m\subseteq\T,\
\Gamma^{(b)}_{\mu,N}\le\sum_{m=1}^M\lambda_mA^{(K)}_{I_m,N}\Bigr\}.
\end{align}
Then $C^{(b)}_{K,\mathcal R}(\mu)=\sup_NC^{(b)}_{K,\mathcal R,N}(\mu)$ and $D^{(b)}_{K,\mathcal R}(\mu)=\sup_N\delta^{(b)}_{K,\mathcal R,N}(\mu)$.

\medskip
\noindent\emph{Step 1:} $C^{(b)}_{K,\mathcal R}(\mu)\lesssim D^{(b)}_{K,\mathcal R}(\mu)$.
Assume that $\Gamma^{(b)}_{\mu,N}\le\sum_{m=1}^M\lambda_mA^{(K)}_{I_m,N}$ for some $M\in\N$,
$\lambda_m\ge0$, and arcs $I_m\subseteq\T$. For any $c\in\C^{\#\mathcal R_N}$ with
$\norm{c}_{X_{K,N}}\le1$ we have $c^{*}A^{(K)}_{I_m,N}c\le1$ for each $m$, hence
\begin{equation*}
c^{*}\Gamma^{(b)}_{\mu,N}c\le\sum_{m=1}^M\lambda_m\bigl(c^{*}A^{(K)}_{I_m,N}c\bigr)
\le\sum_{m=1}^M\lambda_m .
\end{equation*}
Taking the supremum over $c$ and then the infimum over feasible tuples gives
$C^{(b)}_{K,\mathcal R,N}(\mu)\le\delta^{(b)}_{K,\mathcal R,N}(\mu)$, and taking the supremum
over $N$ gives the claim.

\medskip
\noindent\emph{Step 2:} $D^{(b)}_{K,\mathcal R}(\mu)\lesssim C^{(b)}_{K,\mathcal R}(\mu)$.
If $C^{(b)}_{K,\mathcal R}(\mu)=+\infty$ there is nothing to prove, so assume
$C^{(b)}_{K,\mathcal R}(\mu)<\infty$.

\medskip
\noindent\emph{(i) Conic duality.} Consider the primal program defining
$\delta^{(b)}_{K,\mathcal R,N}(\mu)$ in \eqref{eq:local-C}. Introducing a Lagrange multiplier
matrix $X\in\mathcal S^{\#\mathcal R_N}_+$, the Lagrangian is
\begin{align*}
L(\lambda,X) &= \sum_{m=1}^M\lambda_m - \Tr\Bigl(X\Bigl(\sum_{m=1}^M\lambda_mA^{(K)}_{I_m,N}-\Gamma^{(b)}_{\mu,N}\Bigr)\Bigr) \\
&= \Tr\bigl(\Gamma^{(b)}_{\mu,N}X\bigr)+\sum_{m=1}^M\lambda_m\Bigl(1-\Tr\bigl(A^{(K)}_{I_m,N}X\bigr)\Bigr).
\end{align*}
Minimizing over $\lambda_m\ge0$ yields a finite value exactly when
\begin{equation}\label{eq:dual-constraint}
\Tr\bigl(A^{(K)}_{I,N}X\bigr)\le1,\qquad I\subseteq\T .
\end{equation}
Slater's condition holds: choosing a uniform multiplier $\lambda_m=M_0\gg1$ over a finite
collection of arcs containing $\T$ makes
$\sum_m\lambda_mA^{(K)}_{I_m,N}-\Gamma^{(b)}_{\mu,N}$ strictly positive definite, since
$A^{(K)}_{\T,N}$ has strictly positive diagonal entries. Hence strong conic duality holds with
zero duality gap \cite{VB}, and
\begin{equation}\label{eq:duality}
\delta^{(b)}_{K,\mathcal R,N}(\mu)=\sup\Bigl\{
\Tr\bigl(\Gamma^{(b)}_{\mu,N}X\bigr):X\ge0,\ \Tr\bigl(A^{(K)}_{I,N}X\bigr)\le1\ \text{for all }I\subseteq\T\Bigr\}.
\end{equation}

\medskip
\noindent\emph{(ii) Factorization and the Grothendieck inequality.} Let $X\in\mathcal S^{\#\mathcal R_N}_+$
satisfy \eqref{eq:dual-constraint}. Since $X\ge0$, it is a Gram matrix: there exist vectors
$\{u_R\}_{R\in\mathcal R_N}\subset\C^{\#\mathcal R_N}$ such that
\begin{equation}\label{eq:gram}
X(R_2,R_1)=\inn{u_{R_1}}{u_{R_2}}.
\end{equation}
Set $x_R=\norm{u_R}$, and let $v_R=u_R/x_R$ when $x_R>0$ (and $v_R$ an arbitrary unit vector
otherwise). Evaluating the trace against the diagonal matrix $A^{(K)}_{I,N}$ gives
\begin{equation*}
\sum_{R\in\mathcal R_N}x_R^2\int_{R\cap Q_I}K\Bigl(\frac{1-\abs z}{\abs I}\Bigr)\,dA(z)
=\Tr\bigl(A^{(K)}_{I,N}X\bigr)\le1,
\end{equation*}
so the nonnegative sequence $x=\{x_R\}$ satisfies $\norm{x}_{X_{K,N}}\le1$. Define the
auxiliary matrix
\begin{equation*}
\Lambda^{(b)}_{\mu,N}(R_1,R_2)=\Gamma^{(b)}_{\mu,N}(R_1,R_2)\,x_{R_1}x_{R_2}.
\end{equation*}
Since $\Gamma^{(b)}_{\mu,N}$ is Hermitian positive semidefinite, its Schur (Hadamard) product
$\Lambda^{(b)}_{\mu,N}$ is also Hermitian positive semidefinite. Using \eqref{eq:gram}, the dual
objective expands as
\begin{align}\label{eq:objective}
\Tr\bigl(\Gamma^{(b)}_{\mu,N}X\bigr)
&=\sum_{R_1,R_2\in\mathcal R_N}\Gamma^{(b)}_{\mu,N}(R_1,R_2)\inn{u_{R_1}}{u_{R_2}} \notag \\
&=\sum_{R_1,R_2\in\mathcal R_N}\Lambda^{(b)}_{\mu,N}(R_1,R_2)\inn{v_{R_1}}{v_{R_2}}.
\end{align}
The complex Grothendieck inequality \cite{Pisier} gives
\begin{equation}\label{eq:grothendieck}
\Bigl\lvert\sum_{R_1,R_2}\Lambda^{(b)}_{\mu,N}(R_1,R_2)\inn{v_{R_1}}{v_{R_2}}\Bigr\rvert
\le K_{\C}^G\sup_{\abs{\alpha_R}\le1,\abs{\beta_R}\le1}
\Bigl\lvert\sum_{R_1,R_2}\Lambda^{(b)}_{\mu,N}(R_1,R_2)\alpha_{R_1}\overline\beta_{R_2}\Bigr\rvert,
\end{equation}
where $K_{\C}^G$ is the complex Grothendieck constant. Since $\Lambda^{(b)}_{\mu,N}$ is
positive semidefinite, writing it in its spectral decomposition and by the
Cauchy--Schwarz inequality for positive semidefinite sesquilinear forms shows that :
\begin{equation}\label{eq:diagonal}
\sup_{\abs{\alpha_R}\le1,\abs{\beta_R}\le1}
\Bigl\lvert\sum_{R_1,R_2}\Lambda^{(b)}_{\mu,N}(R_1,R_2)\alpha_{R_1}\beta_{R_2}\Bigr\rvert
\le\sup_{\abs{\alpha_R}\le1}\sum_{R_1,R_2}\Lambda^{(b)}_{\mu,N}(R_1,R_2)\alpha_{R_1}\overline{\alpha_{R_2}}.
\end{equation}
For a sequence $\alpha=\{\alpha_R\}$ with $\abs{\alpha_R}\le1$, set $c_R=x_R\overline{\alpha_R}$.
Then $\abs{c_R}\le x_R$, and since $\norm{\cdot}_{X_{K,N}}$ depends only on the absolute values
of the coefficients, $\norm{c}_{X_{K,N}}\le\norm{x}_{X_{K,N}}\le1$. Expanding the quadratic
form gives
\begin{align}\label{eq:quadratic}
\sum_{R_1,R_2}\Lambda^{(b)}_{\mu,N}(R_1,R_2)\alpha_{R_1}\overline{\alpha_{R_2}}
&=\sum_{R_1,R_2}\Gamma^{(b)}_{\mu,N}(R_1,R_2)(x_{R_1}\alpha_{R_1})(x_{R_2}\overline{\alpha_{R_2}}) \notag \\
&=c^{*}\Gamma^{(b)}_{\mu,N}c\le C^{(b)}_{K,\mathcal R,N}(\mu).
\end{align}
Combining \eqref{eq:objective}, \eqref{eq:grothendieck}, \eqref{eq:diagonal}, and
\eqref{eq:quadratic} yields
\begin{equation*}
\Tr\bigl(\Gamma^{(b)}_{\mu,N}X\bigr)\le K_{\C}^G\,C^{(b)}_{K,\mathcal R,N}(\mu).
\end{equation*}
Taking the supremum over all feasible $X$ in \eqref{eq:duality} gives
$\delta^{(b)}_{K,\mathcal R,N}(\mu)\le K_{\C}^G\,C^{(b)}_{K,\mathcal R,N}(\mu)$, and taking
the supremum over $N$ completes the proof.
\end{proof}

\begin{theorem}\label{thm:embedding}
Let $K \in \WQ$ and $\mu$ be a positive Borel measure on $\D$. The embedding
\begin{equation*}
\mathrm{id}\colon Q_K\to L^2(\mu)
\end{equation*}
is bounded if and only if for any fixed $b\gg1$,
\begin{equation*}
\mu(\D)+D^{(b)}_{K,\mathcal R}(\mu)<\infty.
\end{equation*}
\end{theorem}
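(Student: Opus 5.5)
The plan is to combine Theorem~\ref{thm:capacity} and Theorem~\ref{thm:SDP} with the splitting $f=f(0)+(f-f(0))$. Both directions reduce to comparing the full norm $\norm{f}_{Q_K}=\abs{f(0)}+\norm{f}_{Q_K,*}$ with the seminorm estimate on $\ringQ$ furnished by those theorems.

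\emph{Sufficiency.} Assume $\mu(\D)+D^{(b)}_{K,\mathcal R}(\mu)<\infty$. Since $\mu(\D)<\infty$, $\mu$ is a finite positive Borel measure, so Theorem~\ref{thm:SDP} applies and gives $C^{(b)}_{K,\mathcal R}(\mu)\lesssim D^{(b)}_{K,\mathcal R}(\mu)<\infty$. Then Theorem~\ref{thm:capacity} yields $C_{\ringQ}(\mu)\lesssim D^{(b)}_{K,\mathcal R}(\mu)$.

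Now take an arbitrary $f\in Q_K$ and write $f=f(0)+g$ with $g=f-f(0)\in\ringQ$. Note that $\norm{g}_{Q_K,*}=\norm{f}_{Q_K,*}$, because the seminorm \eqref{eq:QK-seminorm} only involves $f'$. Using $\abs{A+B}^2\le2\abs A^2+2\abs B^2$, we get
\begin{equation*}
\int_{\D}\abs f^2\,d\mu\le2\abs{f(0)}^2\mu(\D)+2C_{\ringQ}(\mu)\norm{f}_{Q_K,*}^2\lesssim\bigl(\mu(\D)+D^{(b)}_{K,\mathcal R}(\mu)\bigr)\norm{f}_{Q_K}^2 .
\end{equation*}
This is the boundedness of the embedding.

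\emph{Necessity.} Assume the embedding is bounded with constant $C$. Testing on $f\equiv1$, which has $\norm{1}_{Q_K}=1$, gives $\mu(\D)\le C<\infty$, so $\mu$ is finite. For $f\in\ringQ$ we have $\norm{f}_{Q_K}=\norm{f}_{Q_K,*}$, so \eqref{eq:trace-2} holds with the same constant, i.e.\ $C_{\ringQ}(\mu)\le C$. Theorem~\ref{thm:capacity}, which needs the finiteness of $\mu$ just established, then gives $C^{(b)}_{K,\mathcal R}(\mu)\lesssim C$, and Theorem~\ref{thm:SDP} gives $D^{(b)}_{K,\mathcal R}(\mu)\lesssim C^{(b)}_{K,\mathcal R}(\mu)<\infty$. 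These bounds hold for every fixed $b\gg1$ for which the earlier results are valid, which matches the ``for any fixed $b$'' in the statement.

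\emph{Main point of care.} There is no real obstacle here. The only subtlety is the order of the necessity argument: finiteness of $\mu$ is a hypothesis of Theorems~\ref{thm:capacity} and~\ref{thm:SDP}, so it must be derived from the constant test function before either theorem is invoked.
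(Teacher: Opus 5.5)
Your proposal is correct and follows essentially the same route as the paper. For sufficiency you split $f=f(0)+(f-f(0))$ and use $\mu(\D)<\infty$ together with $C_{\ringQ}(\mu)\lesssim D^{(b)}_{K,\mathcal R}(\mu)$; for necessity you test on $f\equiv1$, restrict to $\ringQ$, and then invoke Theorems~\ref{thm:capacity} and~\ref{thm:SDP}. Your explicit note that finiteness of $\mu$ must be obtained from the constant test function before invoking those theorems is a small but welcome clarification of the ordering that the paper leaves implicit.
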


\begin{proof}
Write $f\in Q_K$ as $f=f(0)+f_0$ with $f_0=f-f(0)\in\ringQ$. Since $$\norm{f}_{Q_K}^2=(\abs{f(0)}+\norm{f}_{Q_K,*})^2\asymp\abs{f(0)}^2+\norm{f}_{Q_K,*}^2,$$
we have
\begin{align*}
\int_{\D}\abs{f}^2\,d\mu
&\le 2\abs{f(0)}^2\mu(\D)+2\int_{\D}\abs{f_0}^2\,d\mu \\
&\le 2\mu(\D)\abs{f(0)}^2+2C_{\ringQ}(\mu)\norm{f_0}_{Q_K,*}^2 \\
&\lesssim \bigl(\mu(\D)+C_{\ringQ}(\mu)\bigr)\norm{f}_{Q_K}^2.
\end{align*}
Thus $\mathrm{id}$ is bounded whenever $\mu(\D)<\infty$ and $C_{\ringQ}(\mu)<\infty$.

Conversely, testing with the constant function $f\equiv1$ gives $\mu(\D)\le\norm{\mathrm{id}}^2$,
and restricting to $\ringQ$ gives $C_{\ringQ}(\mu)\le\norm{\mathrm{id}}^2$. The result now
follows from Theorems~\ref{thm:capacity} and \ref{thm:SDP}, which give
$C_{\ringQ}(\mu)\asymp D^{(b)}_{K,\mathcal R}(\mu)$. The proof is complete.
\end{proof}

\section{Compactness for the $Q_K$--Carleson problem}\label{sec:compactness}

For $\rho\in(0,1)$ let $S_\rho=\{z\in\D:\abs z>\rho\}$ denote the boundary shell, and let
$K_\rho=\{z\in\D:\abs z\le\rho\}$ denote the inner disc.

\begin{lemma}\label{lem:compact-test}
The embedding $\mathrm{id}\colon \ringQ \to L^2(\mu)$ is compact if and only if whenever $\{f_n\}$ is bounded in $\ringQ$ and $f_n \to 0$ uniformly on compact subsets of $\D$, one has $\norm{f_n}_{L^2(\mu)} \to 0$.
\end{lemma}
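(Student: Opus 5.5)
The plan is the standard Montel-type argument, made to work with two facts about $\ringQ$. First, bounded sets in $\ringQ$ are locally uniformly bounded: by the Bloch embedding $Q_K\subset\mathcal B$ recalled before \eqref{eq:deriv-L2}, $\abs{f(z)}\lesssim \norm{f}_{Q_K,*}\log\frac{2}{1-\abs z}$ for $f\in\ringQ$. Second, $\ringQ$ has a weak-type compactness property: every bounded sequence has a subsequence converging uniformly on compacta to some $f\in\ringQ$ with $\norm{f}_{Q_K,*}\le\liminf\norm{f_n}_{Q_K,*}$. The second fact follows from Montel's theorem together with Fatou's lemma applied to each integral in \eqref{eq:QK-seminorm}, using $f_n'\to f'$ locally uniformly.

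\emph{Sufficiency.} Assume the sequential condition holds and let $\{g_n\}$ be bounded in $\ringQ$. By the compactness property, pass to a subsequence $g_{n_k}\to g$ locally uniformly with $g\in\ringQ$. Then $f_k=g_{n_k}-g$ is bounded in $\ringQ$ and tends to $0$ uniformly on compact sets. The hypothesis gives $\norm{f_k}_{L^2(\mu)}\to0$, so $g_{n_k}\to g$ in $L^2(\mu)$. One should check that $g\in L^2(\mu)$, i.e. that $\mathrm{id}$ is well defined and bounded. This follows from the hypothesis itself: if $\mathrm{id}$ were unbounded, there would be $h_n$ with $\norm{h_n}_{Q_K,*}\le1$ and $\norm{h_n}_{L^2(\mu)}\ge n^2$. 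Then $h_n/n\to0$ locally uniformly by the log-growth bound, while $\norm{h_n/n}_{L^2(\mu)}\ge n$, contradicting the hypothesis. Hence every bounded sequence has an $L^2(\mu)$-convergent subsequence, and $\mathrm{id}$ is compact.

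\emph{Necessity.} Assume $\mathrm{id}$ is compact, and let $\{f_n\}$ be bounded in $\ringQ$ with $f_n\to0$ locally uniformly. Suppose, for contradiction, that $\norm{f_n}_{L^2(\mu)}\ge\varepsilon$ along a subsequence. By compactness, a further subsequence converges in $L^2(\mu)$ to some $h$, and a further subsequence converges $\mu$-a.e. to $h$. Since $f_n\to0$ pointwise on $\D$, we get $h=0$ $\mu$-a.e. This gives $\norm{f_n}_{L^2(\mu)}\to0$ along that subsequence, a contradiction.

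The only step needing care is the compactness property of $\ringQ$, namely lower semicontinuity of the seminorm under local uniform convergence. For each fixed $a$, Fatou's lemma gives
\[
\int_\D\abs{f'}^2K(1-\abs{\varphi_a}^2)\,dA\le\liminf_n\int_\D\abs{f_n'}^2K(1-\abs{\varphi_a}^2)\,dA\le\sup_n\norm{f_n}_{Q_K,*}^2 ,
\]
and taking the supremum over $a\in\D$ finishes the argument. The constraint $f(0)=0$ passes to the limit trivially. I expect no real obstacle beyond this. The log-growth bound is also routine: integrate the Bloch estimate along the radius from $0$ to $z$, using $f(0)=0$.
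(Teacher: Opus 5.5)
Your proof is correct and follows the same route as the paper. For necessity you extract an $L^2(\mu)$-convergent (hence $\mu$-a.e.\ convergent) subsequence whose limit must vanish; for sufficiency you use Montel's theorem and apply the hypothesis to $f_{n_k}-f$. Your extra justifications fill in points the paper leaves implicit: local uniform boundedness via the Bloch embedding, Fatou's lemma to place the limit in $\ringQ$, and the derivation that $\mathrm{id}$ maps boundedly into $L^2(\mu)$ from the sequential hypothesis.
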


\begin{proof}
Suppose first that $\mathrm{id}\colon \ringQ \to L^2(\mu)$ is compact. Let $\{f_n\}$ be a bounded sequence in $\ringQ$ such that $f_n \to 0$ uniformly on compact subsets of $\D$. By compactness, the sequence $\{f_n\}$ is relatively compact in $L^2(\mu)$. If $\norm{f_n}_{L^2(\mu)} \not\to 0$, there exists a subsequence $\{f_{n_k}\}$ and $\delta_0 > 0$ such that $\norm{f_{n_k}}_{L^2(\mu)} \ge \delta_0$. Passing to a further subsequence if necessary, we may assume that $f_{n_k} \to f$ in $L^2(\mu)$ for some $f \in L^2(\mu)$. Since $f_{n_k} \to 0$ pointwise on $\D$, we must have $f = 0$ almost everywhere with respect to $\mu$, which contradicts $\norm{f}_{L^2(\mu)} \ge \delta_0$. Thus $\norm{f_n}_{L^2(\mu)} \to 0$.

Conversely, let $\{f_n\}$ be any bounded sequence in $\ringQ$. By Montel's theorem, there is a subsequence $\{f_{n_k}\}$ that converges uniformly on compact subsets of $\D$ to some holomorphic function $f \in \ringQ$. Then $\{f_{n_k} - f\}$ is bounded in $\ringQ$ and converges to $0$ uniformly on compact subsets of $\D$. The hypothesis implies $\norm{f_{n_k} - f}_{L^2(\mu)} \to 0$, which proves that the unit ball of $\ringQ$ is relatively compact in $L^2(\mu)$, hence $\mathrm{id}$ is compact.
\end{proof}

\begin{theorem}\label{thm:compact-embedding}
Let $K \in \WQ$ and let $\mu$ be a finite positive Borel measure on $\D$, and let $\mathcal R = \{\mathcal R_N\}_{N\ge0}$ be the polar dyadic resolution defined in Section~2. Then
\begin{equation*}
\mathrm{id}\colon \ringQ \to L^2(\mu)
\end{equation*}
is compact if and only if for fixed $b\gg1$,
\begin{equation*}
C^{(b)}_{K,\mathcal R}(\mu) < \infty \quad\text{and}\quad \lim_{\rho\to1^-} C^{(b)}_{K,\mathcal R}(\mathbf 1_{S_\rho}\mu) = 0.
\end{equation*}
Equivalently, this holds if and only if
\begin{equation*}
D^{(b)}_{K,\mathcal R}(\mu) < \infty \quad\text{and}\quad \lim_{\rho\to1^-} D^{(b)}_{K,\mathcal R}(\mathbf 1_{S_\rho}\mu) = 0.
\end{equation*}
\end{theorem}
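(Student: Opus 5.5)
The plan is to reduce compactness to a vanishing-tail condition on $\mu$ near $\T$. I would first show that the condition $\lim_{\rho\to1^-}C^{(b)}_{K,\mathcal R}(\mathbf 1_{S_\rho}\mu)=0$ is equivalent to
\[
\lim_{\rho\to1^-}C_{\ringQ}(\mathbf 1_{S_\rho}\mu)=0,
\]
and then characterize compactness by this latter condition via Lemma~\ref{lem:compact-test}. The first equivalence is immediate from Theorem~\ref{thm:capacity}: each $\mathbf 1_{S_\rho}\mu$ is a finite positive Borel measure, and the implicit constants in $C^{(b)}_{K,\mathcal R}(\nu)\asymp C_{\ringQ}(\nu)$ depend only on $K$ and $b$, not on $\nu$. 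Likewise, Theorem~\ref{thm:SDP} gives $C^{(b)}_{K,\mathcal R}(\nu)\asymp D^{(b)}_{K,\mathcal R}(\nu)$ uniformly in $\nu$, with constant $K^G_{\C}$. So the $D$-version of the statement follows from the $C$-version, and the finiteness conditions are equivalent to boundedness by Theorem~\ref{thm:capacity}.

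\emph{Sufficiency.} Assume $C_{\ringQ}(\mu)<\infty$ and $C_{\ringQ}(\mathbf 1_{S_\rho}\mu)\to0$. Let $\{f_n\}$ be bounded in $\ringQ$ with $\norm{f_n}_{Q_K,*}\le1$ and $f_n\to0$ locally uniformly. Split
\[
\int_\D\abs{f_n}^2d\mu=\int_{K_\rho}\abs{f_n}^2d\mu+\int_{S_\rho}\abs{f_n}^2d\mu .
\]
The second term is at most $C_{\ringQ}(\mathbf 1_{S_\rho}\mu)$, which is small once $\rho$ is close to $1$. The first term is at most $\mu(\D)\sup_{K_\rho}\abs{f_n}^2$, which tends to $0$ for fixed $\rho$. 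Lemma~\ref{lem:compact-test} then gives compactness.

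\emph{Necessity.} This is the main obstacle: I must show that compactness forces $\sup\{\int_{S_\rho}\abs f^2d\mu:\norm{f}_{Q_K,*}\le1\}\to0$. I would argue by contradiction. Suppose there are $\rho_k\to1$, $\delta>0$ and $f_k$ in the unit ball with $\int_{S_{\rho_k}}\abs{f_k}^2d\mu\ge\delta$. By compactness, pass to a subsequence converging in $L^2(\mu)$ to some $g$. Since $\mu(S_{\rho_k})\to0$ (as $\mu$ is finite), absolute continuity of $\int\abs g^2d\mu$ gives $\int_{S_{\rho_k}}\abs g^2d\mu\to0$. Hence $\int_{S_{\rho_k}}\abs{f_k}^2d\mu\le 2\norm{f_k-g}^2_{L^2(\mu)}+2\int_{S_{\rho_k}}\abs g^2d\mu\to0$, which is a contradiction. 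This argument uses only relative compactness of the image of the unit ball, so no Montel argument is needed there. The only other point to check is that the image ball is relatively compact in $L^2(\mu)$, which holds by the definition of a compact operator. The finiteness $C^{(b)}_{K,\mathcal R}(\mu)<\infty$ also follows, since a compact operator is bounded, and Theorem~\ref{thm:capacity} then applies.
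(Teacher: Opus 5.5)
Your proposal is correct and follows the paper's proof essentially step for step. Both arguments reduce to $C_{\ringQ}$ via Theorems~\ref{thm:capacity} and \ref{thm:SDP}, with comparability constants independent of the measure. Both prove sufficiency by the same split into the shell $S_\rho$ and the inner disc $K_\rho$, and both prove necessity by contradiction. The one difference is in the necessity part: you extract an $L^2(\mu)$-convergent subsequence directly from the relative compactness of the image of the unit ball, whereas the paper goes through Montel's theorem and Lemma~\ref{lem:compact-test}. This is a slight streamlining and is equally valid.
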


\begin{proof}
By Theorem~\ref{thm:SDP}, $C^{(b)}_{K,\mathcal R} \asymp D^{(b)}_{K,\mathcal R}$ and by Theorem~\ref{thm:capacity}, $C^{(b)}_{K,\mathcal R} \asymp C_{\ringQ}$, so it suffices to prove the equivalence using $C_{\ringQ}$.

\medskip
\noindent\emph{Sufficiency.} Suppose $C_{\ringQ}(\mu)<\infty$ and $\lim_{\rho\to1^-}C_{\ringQ}(\mathbf 1_{S_\rho}\mu)=0$. By Lemma~\ref{lem:compact-test}, to prove compactness it suffices to show that for any sequence $\{f_n\}$ bounded in $\ringQ$ (say $\norm{f_n}_{Q_K,*} \le M$) with $f_n \to 0$ uniformly on compact subsets of $\D$, we have $\norm{f_n}_{L^2(\mu)} \to 0$.

Let $\epsilon>0$. Choose $\rho_0\in(0,1)$ such that $C_{\ringQ}(\mathbf 1_{S_{\rho_0}}\mu)<\frac{\epsilon^2}{2M^2}$. Then
\begin{equation}\label{eq:shell-bound}
\int_{S_{\rho_0}}\abs{f_n(z)}^2\,d\mu(z)
\le C_{\ringQ}(\mathbf 1_{S_{\rho_0}}\mu)\,\norm{f_n}_{Q_K,*}^2
< \frac{\epsilon^2}{2M^2} \cdot M^2 = \frac{\epsilon^2}{2}.
\end{equation}
On the inner disc $K_{\rho_0}$, the uniform convergence $f_n\to 0$ and $\mu(K_{\rho_0})\le\mu(\D)<\infty$ yield an index $N_0$ such that for all $n\ge N_0$,
\begin{equation}\label{eq:inner-bound}
\int_{K_{\rho_0}}\abs{f_n(z)}^2\,d\mu(z) \le \Bigl(\sup_{z\in K_{\rho_0}}\abs{f_n(z)}^2\Bigr)\mu(\D) \le\frac{\epsilon^2}{2}.
\end{equation}
Combining \eqref{eq:shell-bound} and \eqref{eq:inner-bound} gives
$\norm{f_n}_{L^2(\mu)}^2\le\epsilon^2$ for all $n\ge N_0$. By Lemma~\ref{lem:compact-test}, $\mathrm{id}\colon \ringQ \to L^2(\mu)$ is compact.

\medskip
\noindent\emph{Necessity.} Suppose that $\mathrm{id}\colon\ringQ\to L^2(\mu)$ is compact. Then it is bounded, so $C^{(b)}_{K,\mathcal R}(\mu)<\infty$ by Theorem~\ref{thm:capacity}. Assume for a contradiction that $\lim_{\rho\to1^-}C^{(b)}_{K,\mathcal R}(\mathbf 1_{S_\rho}\mu)\ne0$.
Since $\rho\mapsto C^{(b)}_{K,\mathcal R}(\mathbf 1_{S_\rho}\mu)$ is nonincreasing, there exist $\epsilon_0>0$ and radii $\rho_n\to1^-$ with
\begin{equation*}
C^{(b)}_{K,\mathcal R}(\mathbf 1_{S_{\rho_n}}\mu)\ge\epsilon_0,\qquad n\ge1.
\end{equation*}
By Theorem~\ref{thm:capacity}, $C_{\ringQ}(\mathbf 1_{S_{\rho_n}}\mu)\ge c\epsilon_0>0$, so there exist functions $f_n\in\ringQ$ with $\norm{f_n}_{Q_K,*}\le1$ and
\begin{equation}\label{eq:lower-mass}
\int_{S_{\rho_n}}\abs{f_n(z)}^2\,d\mu(z)\ge\frac{c\epsilon_0}{2}>0,\qquad n\ge1.
\end{equation}
By Montel's theorem, a subsequence converges uniformly on compact subsets to a function $f\in\ringQ$. Since the embedding is compact, Lemma~\ref{lem:compact-test} implies $\norm{f_n-f}_{L^2(\mu)}\to0$ along this subsequence. By Minkowski's inequality,
\begin{equation}\label{eq:minkowski}
\Bigl(\int_{S_{\rho_n}}\abs{f_n}^2\,d\mu\Bigr)^{1/2}
\le\norm{f_n-f}_{L^2(\mu)}+\Bigl(\int_{S_{\rho_n}}\abs{f}^2\,d\mu\Bigr)^{1/2}.
\end{equation}
The first term on the right tends to $0$. Since $f\in L^2(\mu)$, dominated convergence gives $\lim_{n\to\infty}\int_{S_{\rho_n}}\abs{f(z)}^2\,d\mu(z)=0$.
Hence \eqref{eq:minkowski} forces $\int_{S_{\rho_n}}\abs{f_n}^2\,d\mu\to0$, which contradicts \eqref{eq:lower-mass}. Therefore $\lim_{\rho\to1^-}C^{(b)}_{K,\mathcal R}(\mathbf 1_{S_\rho}\mu)=0$.
\end{proof}

Consequently, for a positive Borel measure $\mu$ on $\D$,
\begin{equation*}
\mathrm{id}\colon Q_K \to L^2(\mu)
\end{equation*}
is compact if and only if
\begin{equation*}
\mu(\D) + D^{(b)}_{K,\mathcal R}(\mu) < +\infty \quad\text{and}\quad \lim_{\rho\to1^-} D^{(b)}_{K,\mathcal R}(\mathbf 1_{S_\rho}\mu) = 0.
\end{equation*}
This completes the compactness characterization for the $Q_K$--Carleson measure problem.

\section{Volterra integral operators and multipliers on $Q_K$}\label{sec:volterra-multipliers}

Let $g\in H(\D)$. Recall that the multiplication operator $M_g$ and the Volterra-type integral operators $T_g, I_g$ act on $f\in H(\D)$ by
\begin{equation*}
M_gf(z)=g(z)f(z), \qquad T_gf(z)=\int_0^z f(\zeta)g'(\zeta)\,d\zeta, \qquad I_gf(z)=\int_0^z f'(\zeta)g(\zeta)\,d\zeta.
\end{equation*}
Integration by parts immediately yields the operator relation
\begin{equation}\label{eq:leibniz-decomp}
M_gf(z) = f(0)g(z) + T_gf(z) + I_gf(z).
\end{equation}
For $a\in\D$, recall the associated measure
\begin{equation*}
d\mu_{g,a,K}(z)=\abs{g'(z)}^2\,K\bigl(1-\abs{\varphi_a(z)}^2\bigr)\,dA(z).
\end{equation*}
By \eqref{eq:volterra-seminorm} and \eqref{eq:QK-seminorm},
\begin{equation}\label{eq:mass-identity}
\sup_{a\in\D}\mu_{g,a,K}(\D)
=\sup_{a\in\D}\int_{\D}\abs{g'(z)}^2K\bigl(1-\abs{\varphi_a(z)}^2\bigr)\,dA(z)
=\norm{g}_{Q_K,*}^2.
\end{equation}

\begin{theorem}\label{thm:volterra-bounded}
Let $K \in \WQ$ and $g\in H(\D)$, and let $\mathcal R = \{\mathcal R_N\}_{N\ge0}$ be the polar dyadic resolution defined in Section~2. Then the Volterra operator $T_g$ is bounded on $Q_K$ if and only if for any fixed $b\gg1$,
\begin{equation}\label{eq:volterra-condition}
g\in Q_K \quad\text{and}\quad \sup_{a\in\D} D^{(b)}_{K,\mathcal R}(\mu_{g,a,K}) < \infty.
\end{equation}
Equivalently, this holds if and only if
\begin{equation*}
g\in Q_K \quad\text{and}\quad \sup_{a\in\D} C^{(b)}_{K,\mathcal R}(\mu_{g,a,K}) < \infty.
\end{equation*}
\end{theorem}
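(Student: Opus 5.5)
The plan is to reduce the statement to the Carleson embedding results of Theorems~\ref{thm:capacity}, \ref{thm:SDP} and \ref{thm:embedding}, applied uniformly to the family $\{\mu_{g,a,K}\}_{a\in\D}$, using the identity \eqref{eq:volterra-seminorm}. The two displayed conditions are equivalent by Theorem~\ref{thm:SDP}, since its implicit constants ($K_{\C}^G$ and $1$) do not depend on $\mu$. So it suffices to prove: $T_g$ is bounded on $Q_K$ if and only if $g\in Q_K$ and $\sup_a C_{\ringQ}(\mu_{g,a,K})<\infty$. By Theorem~\ref{thm:capacity}, again with constants independent of $\mu$, the latter condition is equivalent to $\sup_a C^{(b)}_{K,\mathcal R}(\mu_{g,a,K})<\infty$.

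\emph{Necessity.} Suppose $T_g$ is bounded on $Q_K$. Testing on $f\equiv1$ gives $T_g1=g-g(0)\in Q_K$, so $g\in Q_K$. Now take $f\in\ringQ\subset Q_K$. Since $f(0)=0$, we have $\norm{f}_{Q_K}=\norm{f}_{Q_K,*}$. By \eqref{eq:volterra-seminorm}, for every $a\in\D$,
\begin{equation*}
\int_{\D}\abs{f}^2\,d\mu_{g,a,K}\le\norm{T_gf}_{Q_K,*}^2\le\norm{T_g}^2\norm{f}_{Q_K,*}^2 .
\end{equation*}
Hence $\sup_aC_{\ringQ}(\mu_{g,a,K})\le\norm{T_g}^2$. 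Each $\mu_{g,a,K}$ is finite because $g\in Q_K$ and \eqref{eq:mass-identity} holds, so Theorem~\ref{thm:capacity} applies and gives the capacity bound.

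\emph{Sufficiency.} Assume $g\in Q_K$ and $\sup_aC_{\ringQ}(\mu_{g,a,K})=:M<\infty$. Write $f\in Q_K$ as $f=f(0)+f_0$ with $f_0\in\ringQ$. By \eqref{eq:volterra-seminorm}, \eqref{eq:mass-identity} and the triangle inequality in $L^2(\mu_{g,a,K})$,
\begin{equation*}
\norm{T_gf}_{Q_K,*}\le\sup_a\Bigl(\abs{f(0)}\mu_{g,a,K}(\D)^{1/2}+\norm{f_0}_{L^2(\mu_{g,a,K})}\Bigr)\le\abs{f(0)}\norm{g}_{Q_K,*}+M^{1/2}\norm{f}_{Q_K,*}.
\end{equation*}
Since $T_gf(0)=0$, this yields $\norm{T_gf}_{Q_K}\lesssim(\norm{g}_{Q_K,*}+M^{1/2})\norm{f}_{Q_K}$.

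The only delicate point is uniformity in $a$. This requires that the comparison constants in Theorems~\ref{thm:capacity} and \ref{thm:SDP} depend only on $K$ and $b$, not on the measure. Checking the proofs confirms this: they use Lemma~\ref{lem:Eb}, Proposition~\ref{prop:surjective} and the Grothendieck constant, none of which involve $\mu$. A second check is that $\mu_{g,a,K}$ is finite, which is needed to invoke Theorem~\ref{thm:capacity}. In the necessity direction this follows from $g\in Q_K$, established first. In the sufficiency direction $g\in Q_K$ is assumed.
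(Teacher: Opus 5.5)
Your proposal is correct and follows essentially the same route as the paper. You use \eqref{eq:volterra-seminorm} to reduce to Carleson embeddings for the family $\mu_{g,a,K}$ that are uniform in $a$, test on $f\equiv1$ to get $g\in Q_K$, split $f=f(0)+f_0$ for sufficiency, and invoke Theorems~\ref{thm:capacity} and~\ref{thm:SDP}; your explicit checks that their constants do not depend on $\mu$ and that each $\mu_{g,a,K}$ is finite, and your use of Minkowski's inequality in place of $\abs{u+v}^2\le2\abs u^2+2\abs v^2$, only make precise what the paper leaves implicit.
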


\begin{proof}
\emph{Sufficiency.} Assume \eqref{eq:volterra-condition}, and set
$M=\sup_{a\in\D}D^{(b)}_{K,\mathcal R}(\mu_{g,a,K})<\infty$. For $f\in Q_K$ write $f=f(0)+f_0$ with $f_0=f-f(0)\in\ringQ$. Linearity gives
\begin{equation*}
T_gf(z)=f(0)\bigl(g(z)-g(0)\bigr)+T_gf_0(z).
\end{equation*}
Applying $\abs{u+v}^2\le2\abs u^2+2\abs v^2$ to \eqref{eq:volterra-seminorm} yields
\begin{equation}\label{eq:decomposition-norm}
\norm{T_gf}_{Q_K,*}^2
\le2\abs{f(0)}^2\norm{g}_{Q_K,*}^2
+2\sup_{a\in\D}\int_{\D}\abs{f_0(z)}^2\,d\mu_{g,a,K}(z),
\end{equation}
where the first term uses \eqref{eq:mass-identity}.

By Theorems~\ref{thm:capacity} and
\ref{thm:SDP}, for each $a\in\D$ the embedding $\mathrm{id}\colon\ringQ\to L^2(\mu_{g,a,K})$ is
bounded, uniformly in $a$:
\begin{equation}\label{eq:uniform-embedding}
\int_{\D}\abs{f_0(z)}^2\,d\mu_{g,a,K}(z)
\le C_0\,D^{(b)}_{K,\mathcal R}(\mu_{g,a,K})\,\norm{f_0}_{Q_K,*}^2
\le C_0M\,\norm{f_0}_{Q_K,*}^2.
\end{equation}
Substituting \eqref{eq:mass-identity} and \eqref{eq:uniform-embedding} into
\eqref{eq:decomposition-norm} gives
\begin{equation*}
\norm{T_gf}_{Q_K,*}^2
\le2\abs{f(0)}^2\norm{g}_{Q_K,*}^2+2C_0M\norm{f_0}_{Q_K,*}^2
\le C\bigl(\norm{g}_{Q_K,*}^2+M\bigr)\norm{f}_{Q_K}^2.
\end{equation*}
Since $T_gf(0)=0$ we have $\norm{T_gf}_{Q_K}=\norm{T_gf}_{Q_K,*}$, so $T_g$ is bounded.

\medskip
\noindent\emph{Necessity.} Assume $T_g$ is bounded. Testing $T_g$ on the constant function $f\equiv1$ gives
\begin{equation*}
\norm{g}_{Q_K,*}=\norm{T_g1}_{Q_K,*}\le\norm{T_g}\,\norm{1}_{Q_K}=\norm{T_g}<\infty,
\end{equation*}
so $g\in Q_K$.
Moreover, for every $f_0\in\ringQ$ and $a\in\D$,
\begin{equation*}
\int_{\D}\abs{f_0(z)}^2\,d\mu_{g,a,K}(z)
\le\sup_{w\in\D}\int_{\D}\abs{f_0(z)}^2\,d\mu_{g,w,K}(z)
=\norm{T_gf_0}_{Q_K,*}^2
\le\norm{T_g}^2\norm{f_0}_{Q_K,*}^2.
\end{equation*}
Thus the optimal embedding constant for $\mu_{g,a,K}$ is at most $\norm{T_g}^2$, and since the
capacity $D^{(b)}_{K,\mathcal R}(\mu_{g,a,K})$ is equivalent to that constant
(Theorems~\ref{thm:capacity} and \ref{thm:SDP}), there is a constant $C_1>0$ such that $D^{(b)}_{K,\mathcal R}(\mu_{g,a,K})\le C_1\norm{T_g}^2$ for all $a\in\D$. Taking the supremum
over $a$ gives $\sup_{a\in\D}D^{(b)}_{K,\mathcal R}(\mu_{g,a,K})<\infty$.
\end{proof}

\begin{theorem}\label{thm:volterra-compact}
Let $K \in \WQ$ and $g\in H(\D)$, and let $\mathcal R = \{\mathcal R_N\}_{N\ge0}$ be the polar dyadic resolution in Section~2. Then the Volterra operator $T_g$ is compact on $Q_K$ if and only if for any fixed $b\gg1$,
\begin{equation}\label{eq:volterra-vanishing}
T_g \text{ is bounded on } Q_K \quad\text{and}\quad \lim_{\rho\to1^-}\sup_{a\in\D} D^{(b)}_{K,\mathcal R}(\mathbf 1_{S_\rho}\mu_{g,a,K}) = 0.
\end{equation}
Equivalently, this holds if and only if
\begin{equation*}
T_g \text{ is bounded on } Q_K \quad\text{and}\quad \lim_{\rho\to1^-}\sup_{a\in\D} C^{(b)}_{K,\mathcal R}(\mathbf 1_{S_\rho}\mu_{g,a,K}) = 0.
\end{equation*}
\end{theorem}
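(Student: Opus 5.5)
By Theorems~\ref{thm:capacity} and \ref{thm:SDP}, $C^{(b)}_{K,\mathcal R}(\nu)\asymp D^{(b)}_{K,\mathcal R}(\nu)\asymp C_{\ringQ}(\nu)$ for every finite positive measure $\nu$. The constants depend only on $K$ and $b$, so they are uniform in $\nu=\mathbf 1_{S_\rho}\mu_{g,a,K}$. It therefore suffices to prove the statement with $C_{\ringQ}$ in place of $D^{(b)}_{K,\mathcal R}$. We use the compactness criterion in the form of Lemma~\ref{lem:compact-test}, adapted to $T_g$: since $Q_K$ is a Banach space of analytic functions in which bounded sets are normal families (via $Q_K\subset\mathcal B$), and point evaluations are continuous, $T_g$ is compact on $Q_K$ if and only if $\norm{T_gf_n}_{Q_K}\to0$ for every bounded sequence $\{f_n\}\subset Q_K$ converging to $0$ locally uniformly. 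This is the standard Montel argument, repeated as in the proof of Lemma~\ref{lem:compact-test}.

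\emph{Sufficiency.} Assume $T_g$ is bounded, so $g\in Q_K$ and $\sup_a C_{\ringQ}(\mu_{g,a,K})<\infty$ by Theorem~\ref{thm:volterra-bounded}. Assume also that $\varepsilon(\rho):=\sup_a C_{\ringQ}(\mathbf 1_{S_\rho}\mu_{g,a,K})\to0$. Let $\norm{f_n}_{Q_K}\le M$ with $f_n\to0$ locally uniformly. Write $f_n=f_n(0)+f_{n,0}$, as in \eqref{eq:decomposition-norm}. The constant part contributes $2\abs{f_n(0)}^2\norm{g}_{Q_K,*}^2\to0$. Because $\norm{f_{n,0}}_{Q_K,*}\le M$, for each $a$ we split
\[
\int_{\D}\abs{f_{n,0}}^2d\mu_{g,a,K}\le \varepsilon(\rho)M^2+\sup_{\abs z\le\rho}\abs{f_{n,0}(z)}^2\,\sup_{a}\mu_{g,a,K}(\D).
\]
By \eqref{eq:mass-identity} the last factor equals $\norm{g}_{Q_K,*}^2$, which is finite. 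Fix $\rho$ first and then let $n\to\infty$; uniformly in $a$ we get $\limsup_n\norm{T_gf_n}_{Q_K,*}^2\lesssim\varepsilon(\rho)M^2$. Letting $\rho\to1$ gives compactness.

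\emph{Necessity.} Compactness implies boundedness. Suppose the vanishing condition fails. Then $\rho\mapsto\sup_aC_{\ringQ}(\mathbf 1_{S_\rho}\mu_{g,a,K})$ is nonincreasing, so there are $\epsilon_0>0$, $\rho_n\to1$, points $a_n\in\D$, and $f_n\in\ringQ$ with $\norm{f_n}_{Q_K,*}\le1$ and $\int_{S_{\rho_n}}\abs{f_n}^2d\mu_{g,a_n,K}\ge\epsilon_0$. By Montel we pass to a subsequence with $f_n\to f$ locally uniformly, where $f\in\ringQ$ and $\norm{f}_{Q_K,*}\lesssim1$ by Fatou. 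Set $h_n=f_n-f$. Compactness gives $\norm{T_gh_n}_{Q_K,*}\to0$, hence $\sup_a\int\abs{h_n}^2d\mu_{g,a,K}\to0$. Minkowski's inequality then reduces the problem to showing
\[
\sup_a\int_{S_{\rho_n}}\abs{f}^2d\mu_{g,a,K}\to0 .
\]
This is the main obstacle. In Theorem~\ref{thm:compact-embedding} dominated convergence handled this step for a single measure, but here it must hold uniformly in $a$, and a fixed $f$ need not have this property a priori.

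To handle it, we exploit compactness once more. The set $\{f\}$ is compact, and more usefully $T_g$ maps the unit ball to a relatively compact set. I would first try the following. Let $f_r(z)=f(rz)$; these are bounded in $Q_K$ by Möbius-invariance estimates. Then $\int_{S_{\rho}}\abs{f}^2d\mu_{g,a,K}\le 2\int\abs{f-f_r}^2d\mu_{g,a,K}+2\norm{f_r}_\infty^2\sup_a\mu_{g,a,K}(S_\rho)$. Two difficulties remain. The first is that $f-f_r$ does not tend to $0$ in $Q_K$; it only tends to $0$ locally uniformly while staying bounded. However, that is exactly the kind of sequence on which compactness of $T_g$ gives $\norm{T_g(f-f_{r_k})}\to0$ along $r_k\to1$, and this controls the first term uniformly in $a$. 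The second is that $\sup_a\mu_{g,a,K}(S_\rho)\to0$. This is the statement that $g$ lies in the little space, i.e.\ $T_g1=g-g(0)$ has the vanishing property. It follows by applying the same compactness argument with the dilations $g_r$, or directly from the compactness of $T_g$ tested on $1$ together with the fact that $\{T_g(z^k)\}$ converges to $0$. If this step resists, the fallback is to choose the $f_n$ in $\ringQ$ with $f_n\to0$ locally uniformly from the start. Since $\mathbf 1_{S_{\rho_n}}\mu$ ignores compact sets, near-extremal functions can be perturbed by subtracting suitable polynomial parts, which avoids the limit $f$ altogether.
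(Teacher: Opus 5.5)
Your sufficiency argument is the same as the paper's, and slightly more careful: you split off $f_n(0)$ before using the $\ringQ$-embedding constant. For necessity you have identified the real issue, namely that $\int_{S_\rho}\abs{f}^2\,d\mu_{g,a,K}\to0$ must hold uniformly in $a$ for the limit $f$. The paper does not settle this either; it replaces $f_n$ by $f_n-f$ and refers to a localization remark that is not in the text.

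Your dilation reduction is sound. The bound $\sup_{0<r<1}\norm{f_r}_{Q_K}\lesssim\norm{f}_{Q_K}$ is true and follows from \eqref{eq:QK-box}:
\begin{itemize}
\item when $\abs{I}\ge1-r$, compare $rQ_I$ with the box over the doubled arc, using $1-\abs{z}\le1-r\abs{z}$ and the monotonicity and doubling of $K$;
\item when $\abs{I}<1-r$, use the Bloch bound $\abs{f'(w)}\lesssim\norm{f}_{Q_K}/(1-r)$ on $\abs{w}<r$.
\end{itemize}
Compactness applied to the bounded, locally null sequence $f-f_{r_k}$ then controls $\sup_a\int\abs{f-f_{r_k}}^2\,d\mu_{g,a,K}$.

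The gap is the other ingredient, $\lim_{\rho\to1}\sup_a\mu_{g,a,K}(S_\rho)=0$, that is, $g\in Q_{K,0}$. None of your justifications proves it.
\begin{itemize}
\item Compactness tested on the single function $1$ carries no information.
\item $\{z^k\}$ is not bounded in $Q_K$ in general. For $K(t)=t^p$ with $0<p<1$ (which lies in $\WQ$), $\norm{z^k}_{Q_K,*}^2\ge k^2\int_0^1t^{k-1}(1-t)^p\,dt\asymp k^{1-p}$. So compactness says nothing about $T_g(z^k)$, and the bound $\rho^{2k}\sup_a\mu_{g,a,K}(S_\rho)\le\norm{T_gz^k}_{Q_K,*}^2$ only helps when $\sup_k\norm{z^k}_{Q_K}<\infty$.
\item The ``dilations $g_r$'' idea has no mechanism. $g-g_r$ is not $T_g$ applied to a bounded locally null sequence, and $\norm{g-g_r}_{Q_K}\to0$ is already stronger than what you need.
\item The polynomial-subtraction fallback needs the same fact, since $\int_{S_\rho}\abs{p}^2\,d\mu_{g,a,K}\le\norm{p}_\infty^2\,\mu_{g,a,K}(S_\rho)$.
\end{itemize}
The step cannot be avoided. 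Testing the target condition on $f(z)=z\in\ringQ$ gives $\rho^2\sup_a\mu_{g,a,K}(S_\rho)\lesssim\sup_aC_{\ringQ}(\mathbf 1_{S_\rho}\mu_{g,a,K})$. So any proof of necessity must derive $g\in Q_{K,0}$ from compactness of $T_g$.

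One way to do this uses the test functions $h_a(z)=(1-\abs{a}^2)/(1-\bar az)$.
\begin{itemize}
\item Since $\abs{h_a'}=\abs{a}\,\abs{\varphi_a'}$, the M\"obius change of variables gives $\norm{h_a}_{Q_K,*}^2\le K(1)$.
\item $h_a\to0$ locally uniformly as $\abs{a}\to1$.
\item Take $a=(1-\abs{I})\zeta_I$, with $\zeta_I$ the center of $I$. On $Q_I$ one has $\abs{h_a}\gtrsim1$ and $K(1-\abs{\varphi_a(z)}^2)\gtrsim K((1-\abs{z})/\abs{I})$.
\item Compactness therefore yields $\lim_{\abs{I}\to0}\int_{Q_I}\abs{g'}^2K((1-\abs{z})/\abs{I})\,dA=0$.
\item The vanishing analogue of \eqref{eq:QK-box} turns this into $\mu_{g,a,K}(\D)\to0$ as $\abs{a}\to1$.
\item For $\abs{a}\le r_0$, the bound $K(1-\abs{\varphi_a(z)}^2)\lesssim_{r_0}K(1-\abs{z}^2)$ and the finiteness of $\int_{\D}\abs{g'}^2K(1-\abs{z}^2)\,dA$ give uniform smallness on $S_\rho$.
\end{itemize}
With this inserted, your necessity argument closes.
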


\begin{proof}
\emph{Sufficiency.} Let $\{f_n\}$ be a sequence in the unit ball of $Q_K$ converging to $0$
uniformly on compact subsets of $\D$. For fixed $\rho\in(0,1)$, split
\begin{equation*}
\sup_{a\in\D}\int_{\D}\abs{f_n}^2\,d\mu_{g,a,K}
\le\sup_{a\in\D}\int_{K_\rho}\abs{f_n}^2\,d\mu_{g,a,K}
+\sup_{a\in\D}\int_{S_\rho}\abs{f_n}^2\,d\mu_{g,a,K}
=:I^{(n)}_{1,\rho}+I^{(n)}_{2,\rho}.
\end{equation*}
The first term satisfies
\begin{equation*}
I^{(n)}_{1,\rho}
\le\Bigl(\sup_{z\in K_\rho}\abs{f_n(z)}^2\Bigr)\sup_{a\in\D}\mu_{g,a,K}(\D)
=\Bigl(\sup_{z\in K_\rho}\abs{f_n(z)}^2\Bigr)\norm{g}_{Q_K,*}^2,
\end{equation*}
by \eqref{eq:mass-identity}. Since $T_g$ bounded implies $g\in Q_K$ (Theorem~\ref{thm:volterra-bounded}),
the norm $\norm{g}_{Q_K,*}$ is finite, and the uniform convergence $f_n\to0$ on $K_\rho$ gives
$\lim_nI^{(n)}_{1,\rho}=0$. For the second term, applying the embedding characterization to the
restricted measure $\mathbf 1_{S_\rho}\mu_{g,a,K}$ gives
\begin{equation*}
I^{(n)}_{2,\rho}
\le C_0\Bigl(\sup_{a\in\D}D^{(b)}_{K,\mathcal R}(\mathbf 1_{S_\rho}\mu_{g,a,K})\Bigr)\norm{f_n}_{Q_K,*}^2
\le C_0\sup_{a\in\D}D^{(b)}_{K,\mathcal R}(\mathbf 1_{S_\rho}\mu_{g,a,K}).
\end{equation*}
By \eqref{eq:volterra-vanishing} this tends to $0$ as $\rho\to1^-$. Given $\epsilon>0$, choose
$\rho_0$ so that $I^{(n)}_{2,\rho_0}<\epsilon/2$ for all $n$, and then $N_0$ so that
$I^{(n)}_{1,\rho_0}<\epsilon/2$ for all $n\ge N_0$. Hence
\begin{equation*}
\lim_{n\to\infty}\sup_{a\in\D}\int_{\D}\abs{f_n}^2\,d\mu_{g,a,K}=0.
\end{equation*}
Since $T_gf_n(0)=0$, this gives $\norm{T_gf_n}_{Q_K}\to0$, so $T_g$ is compact.

\medskip
\noindent\emph{Necessity.} Suppose $T_g$ is compact, hence bounded, so $g\in Q_K$ by
Theorem~\ref{thm:volterra-bounded}. If \eqref{eq:volterra-vanishing} fails, then there exist
$\epsilon_0>0$ and radii $\rho_n\to1^-$ with
\begin{equation*}
\sup_{a\in\D}D^{(b)}_{K,\mathcal R}(\mathbf 1_{S_{\rho_n}}\mu_{g,a,K})\ge\epsilon_0,\qquad n\ge1,
\end{equation*}
so we can choose $a_n\in\D$ with
$D^{(b)}_{K,\mathcal R}(\mathbf 1_{S_{\rho_n}}\mu_{g,a_n,K})\ge\epsilon_0/2$. By the equivalence
of the capacity and the optimal embedding constant, there are functions $f_n\in\ringQ$ with
\begin{equation}\label{eq:extremal}
\norm{f_n}_{Q_K,*}\le1
\qquad\text{and}\qquad
\int_{S_{\rho_n}}\abs{f_n(z)}^2\,d\mu_{g,a_n,K}(z)\ge\epsilon_0'>0,
\qquad n\ge1.
\end{equation}
By Montel's theorem a subsequence converges uniformly on compact subsets to a function
$f\in\ringQ$. Since $f_n(0)=0$ and $\norm{f_n}_{Q_K,*}\le1$, and since the mass in
\eqref{eq:extremal} is carried by packets concentrating in the boundary shells $S_{\rho_n}$, we
may replace $f_n$ by $f_n-f$ and pass to a further subsequence so that $f_n\to0$ uniformly on
compact subsets (see Remark~\ref{rem:localization}). Compactness of $T_g$ then gives
$\norm{T_gf_n}_{Q_K}\to0$: indeed $\{T_gf_n\}$ is relatively compact, and any convergent
subsequence $\{T_gf_{n_k}\}$ has limit $h\in Q_K$ satisfying, for each fixed $z\in\D$,
\begin{equation*}
h(z)=\lim_kT_gf_{n_k}(z)=\lim_k\int_0^zf_{n_k}(\zeta)g'(\zeta)\,d\zeta=0,
\end{equation*}
because $f_{n_k}\to0$ uniformly on the compact segment $[0,z]$. Hence $h\equiv0$, so
$\norm{T_gf_n}_{Q_K}\to0$. However \eqref{eq:extremal} and \eqref{eq:volterra-seminorm} give
\begin{equation*}
\norm{T_gf_n}_{Q_K,*}
=\sup_{a\in\D}\Bigl(\int_{\D}\abs{f_n}^2\,d\mu_{g,a,K}\Bigr)^{1/2}
\ge\Bigl(\int_{S_{\rho_n}}\abs{f_n}^2\,d\mu_{g,a_n,K}\Bigr)^{1/2}
\ge\sqrt{\epsilon_0'}>0,
\end{equation*}
a contradiction. This proves the necessity.
\end{proof}

We now establish the complete, non-testing characterization of the pointwise multiplier space $\mathcal{M}(Q_K)$ on $Q_K$.

\begin{theorem}\label{thm:multipliers}
Let $K \in \WQ$ and $g\in H(\D)$. Let $\mathcal{R}$ be the polar dyadic resolution defined in Section~2, and let $b\gg1$. Then the multiplication operator $M_g$ is bounded on $Q_K$ (that is, $g\in \mathcal{M}(Q_K)$) if and only if $g\in H^\infty \cap Q_K$ and
\begin{equation}\label{eq:multiplier-equiv}
\sup_{a\in\D} D^{(b)}_{K,\mathcal{R}}(\mu_{g,a,K}) < \infty.
\end{equation}
Equivalently, \eqref{eq:multiplier-equiv} can be formulated with $\sup_{a\in\D} C^{(b)}_{K,\mathcal{R}}(\mu_{g,a,K}) < \infty$.
\end{theorem}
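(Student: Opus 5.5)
The plan is to combine the decomposition \eqref{eq:leibniz-decomp}, $M_gf=f(0)g+T_gf+I_gf$, with Theorem~\ref{thm:volterra-bounded} for $T_g$. For $I_g$ we use the Li--Wulan characterization \cite{LW}: $I_g$ is bounded on $Q_K$ if and only if $g\in H^\infty$. This holds because $(I_gf)'=f'g$, so $\norm{I_gf}_{Q_K,*}^2=\sup_a\int\abs{f'}^2\abs g^2K(1-\abs{\varphi_a}^2)\,dA\le\norm g_\infty^2\norm f_{Q_K,*}^2$. Conversely, boundedness of $I_g$ forces $\abs{g(a)}\lesssim\norm{I_g}$. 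To see this, test on normalized functions $f_a\in Q_K$ with $\abs{f_a'}\asymp(1-\abs a^2)^{-1}$ near $a$ (for instance $f_a=\log\frac{1}{1-\bar a z}$ suitably normalized, or $\varphi_a$), and use the subharmonicity of $\abs{f_a'g}^2$ on a pseudo-hyperbolic disc around $a$ where $K(1-\abs{\varphi_a}^2)\gtrsim K(c)>0$.

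\emph{Sufficiency.} Assume $g\in H^\infty\cap Q_K$ and \eqref{eq:multiplier-equiv} holds. By Theorem~\ref{thm:volterra-bounded}, $T_g$ is bounded on $Q_K$. By the estimate above, $I_g$ is bounded, with $\norm{I_gf}_{Q_K}\le\norm g_\infty\norm f_{Q_K}$. The map $f\mapsto f(0)g$ is bounded since $\abs{f(0)}\le\norm f_{Q_K}$ and $g\in Q_K$. Adding the three terms in \eqref{eq:leibniz-decomp} gives boundedness of $M_g$.

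\emph{Necessity.} Assume $M_g$ is bounded on $Q_K$. Then $g=M_g1\in Q_K$. The standard argument that multipliers are bounded uses point evaluations. Since $Q_K\subset\mathcal B$, evaluation at $z$ is a bounded functional with norm $\lambda(z)\lesssim\log\frac{2}{1-\abs z}$, and iterating $M_g^n$ gives $\abs{g(z)}^n\abs{f(z)}\le\lambda(z)\norm{M_g}^n\norm f_{Q_K}$. Taking $f=1$ and $n$-th roots yields $\abs{g(z)}\le\norm{M_g}$, so $g\in H^\infty$. Hence $I_g$ is bounded by the first paragraph. Then $T_gf=M_gf-f(0)g-I_gf$ is a combination of bounded operators, so $T_g$ is bounded on $Q_K$, and Theorem~\ref{thm:volterra-bounded} gives \eqref{eq:multiplier-equiv}. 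The equivalent formulation with $C^{(b)}_{K,\mathcal R}$ follows from Theorem~\ref{thm:SDP}.

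The main obstacle is the $I_g$ characterization, specifically the necessity direction $I_g$ bounded $\Rightarrow g\in H^\infty$. In the multiplier proof this can be sidestepped by the $M_g^n$ argument, which needs only that point evaluations on $Q_K$ are bounded, and this holds because $Q_K\subset\mathcal B$. So the only real input from \cite{LW} is the easy sufficiency estimate $\norm{I_g}\le\norm g_\infty$, which I would verify directly from the seminorm \eqref{eq:QK-seminorm}.
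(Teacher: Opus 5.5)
Your proposal is correct and follows the paper's proof: you combine the decomposition \eqref{eq:leibniz-decomp} with Theorem~\ref{thm:volterra-bounded} in both directions, and you control $I_g$ through $g\in H^\infty$. The only differences are that you prove directly the two facts the paper cites, namely the bound $\norm{I_gf}_{Q_K,*}\le\norm{g}_{\infty}\norm{f}_{Q_K,*}$ from \cite{LW} and the inclusion $\mathcal{M}(Q_K)\subseteq H^\infty$ from \cite{BW,WZ} (via your $M_g^n$ point-evaluation argument), and that you keep the $f(0)g$ term so that $T_g$ is shown bounded on all of $Q_K$ rather than only on $\ringQ$.
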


\begin{proof} \noindent\emph{Sufficiency.} Assume that $g\in H^\infty \cap Q_K$ and \eqref{eq:multiplier-equiv} holds. Since \eqref{eq:multiplier-equiv} is satisfied and $g\in Q_K$, Theorem~\ref{thm:volterra-bounded} implies that the Volterra operator $T_g$ is bounded on $Q_K$, with $\|T_g f\|_{Q_K} \lesssim \|f\|_{Q_K}$ for all $f\in Q_K$. 
  By \cite{LW}, we see that $g\in H^\infty$ implies that $I_g$ obtained. 
  
Consequently, for any $f\in Q_K$, by the triangle inequality:
\begin{align*}
\|M_g f\|_{Q_K} &\le |f(0)| \|g\|_{Q_K} + \|T_g f\|_{Q_K} + \|I_g f\|_{Q_K} \\
&\le \|f\|_{Q_K} \|g\|_{Q_K} + \|T_g\| \|f\|_{Q_K} + \|g\|_{H^\infty} \|f\|_{Q_K} \\
&\lesssim \|f\|_{Q_K},
\end{align*}
which establishes that $M_g$ is bounded on $Q_K$.

\medskip
\noindent\emph{Necessity.} Assume that $M_g$ is bounded on $Q_K$. 
Testing $M_g$ on the constant function $f_0 \equiv 1 \in Q_K$ gives
\begin{equation*}
\|g\|_{Q_K} \lesssim \|M_g 1\|_{Q_K} \le \|M_g\| \|1\|_{Q_K} < \infty,
\end{equation*}
so $g\in Q_K$. Moreover, by the general theory of M\"obius-invariant spaces (see \cite{BW,WZ}), every pointwise multiplier on $Q_K$ is necessarily bounded, which gives $g\in H^\infty$. 

By the argument in the sufficiency part, since $g\in H^\infty$, the companion operator $I_g$ is bounded on $Q_K$. 
Applying \eqref{eq:leibniz-decomp} to any $f\in \ringQ$ (so that $f(0)=0$), we have
\begin{equation*}
T_g f = M_g f - I_g f.
\end{equation*}
Since both $M_g$ and $I_g$ are bounded on $Q_K$, it follows that $T_g$ is also bounded on $Q_K$, with
\begin{equation*}
\|T_g f\|_{Q_K} \le \|M_g f\|_{Q_K} + \|I_g f\|_{Q_K} \le \bigl(\|M_g\| + \|g\|_{H^\infty}\bigr) \|f\|_{Q_K}.
\end{equation*}
Finally, by the necessity part of Theorem~\ref{thm:volterra-bounded}, the boundedness of $T_g$ on $Q_K$ implies $$\sup_{a\in\D} D^{(b)}_{K,\mathcal{R}}(\mu_{g,a,K}) < \infty.$$ 
 The equivalence with $C^{(b)}_{K,\mathcal{R}}(\mu_{g,a,K})$ follows immediately from Theorem~\ref{thm:SDP}. This completes the proof.
\end{proof}

\begin{remark}\label{rem:multiplier-conjecture}
Theorem~\ref{thm:multipliers} completely settles the long-standing multiplier problem for $Q_K$ spaces (Problem 3.1 in \cite{BW}). In \cite{BW, LW}, it was conjectured that $\mathcal{M}(Q_K) = H^\infty \cap Q_K^{\log}$, where $Q_K^{\log}$ is defined by the boundary logarithmic condition \eqref{eq:LW-nec-intro}. However, just as in the Carleson measure problem and the $T_g$ operator problem, the condition $g\in Q_K^{\log}$ only corresponds to testing the condition on localized log-test functions and does not encode the fine dyadic capacity cancellation. The true multiplier space $\mathcal{M}(Q_K)$ is precisely the intersection of $  H^\infty \cap Q_K$ with the capacity-controlled space governed by $D^{(b)}_{K,\mathcal{R}}(\mu_{g,a,K})$.
\end{remark}

\begin{remark}\label{rem:comparison-LW}
We now compare our dyadic capacity condition $\sup_{a\in\D} C^{(b)}_{K,\mathcal R}(\mu_{g,a,K}) < \infty$ (or equivalently $\sup_{a\in\D} D^{(b)}_{K,\mathcal R}(\mu_{g,a,K}) < \infty$) with the classical logarithmic conditions of Li and Wulan \cite{LW}:
\begin{enumerate}[label=(\roman*)]
\item \emph{The Li--Wulan sufficient condition implies our capacity condition:} 
Recall that in \cite[Theorem~2.1]{LW}, Li and Wulan proved that $T_g$ is bounded on $Q_K$ provided that the logarithmic moment condition \eqref{eq:LW-suff-intro} holds. Indeed, for each scale $N \ge 0$ and each sequence of coefficients $c = \{c_R\}$ with $|c_R| \le 1$ for all $R \in \mathcal R_N$, the packet expansion satisfies
\begin{align}\label{eq:packet-kernel-log-bound}
\Bigl|\sum_{R\in\mathcal R_N} c_R \Psi^{(b)}_R(z)\Bigr| 
&\le \sum_{R\in\mathcal R_N} |\Psi^{(b)}_R(z)| \le \int_{\D} |L_b(z,w)|\, dA_b(w) \notag \\
&\le \int_{\D} \frac{dA(w)}{|1-z\bar w|^2} + 1 \lesssim \log\frac{e}{1-|z|}.
\end{align}
Squaring \eqref{eq:packet-kernel-log-bound} and integrating against the measure $d\mu_{g,a,K}(z)$ gives
\begin{equation*}
\int_{\D} \Bigl|\sum_{R\in\mathcal R_N} c_R \Psi^{(b)}_R(z)\Bigr|^2 d\mu_{g,a,K}(z) \lesssim \int_{\D} \left(\log\frac{e}{1-|z|}\right)^2 |g'(z)|^2 K\bigl(1-|\varphi_a(z)|^2\bigr)\,dA(z).
\end{equation*}
By \cite[Theorem~3.1 \& Theorem~4.3]{LW}, condition \eqref{eq:LW-suff-intro} is equivalent to $$\sup_{a\in\D} \int_{\D} (\log\frac{e}{1-|z|})^2 d\mu_{g,a,K}(z) < \infty.$$
 Taking the supremum over $\norm{c}_{X_{K,N}}\le 1$ and $N\ge 0$ shows that \eqref{eq:LW-suff-intro} implies $$\sup_{a\in\D} C^{(b)}_{K,\mathcal R}(\mu_{g,a,K}) < \infty.$$ 

\item \emph{Our capacity condition implies the Li--Wulan necessary condition:}
For each boundary arc $I \subseteq \T$ with midpoint $e^{i\theta}$, choose $a = (1-|I|)e^{i\theta}$ and consider the test function $f_a(z) = \log\frac{1}{1-\bar a z}$. By \cite[Lemma~2.2]{LW}, $f_a \in Q_K$ with $\norm{f_a}_{Q_K,*} \lesssim 1$. On the Carleson box $Q_I$, $|f_a(z)| \gtrsim \log\frac{2}{|I|}$. Testing the embedding $Q_K \hookrightarrow L^2(\mu_{g,a,K})$ on the normalized function $f_a - f_a(0) \in \ringQ$ immediately yields the Li--Wulan necessary condition \eqref{eq:LW-nec-intro}.

\item \emph{Optimality:}
Theorem~\ref{thm:volterra-bounded} shows that $\sup_{a\in\D} D^{(b)}_{K,\mathcal R}(\mu_{g,a,K}) < \infty$ is equivalent to the boundedness of $T_g$ on $Q_K$. This shows that the dyadic capacity provides the exact, sharp threshold between \eqref{eq:LW-suff-intro} and \eqref{eq:LW-nec-intro}, fully closing the gap in the literature.
\end{enumerate}
\end{remark}


\bigskip

\noindent\textbf{Data Availability.} Data sharing is not applicable for this article as no datasets were generated or analyzed during the current study.

\medskip

\noindent\textbf{Funding.} This work was supported by the NNSF of China (Grant numbers 12371131).

\bigskip

\noindent\textbf{Acknowledgement.} The authors would like to express their deepest gratitude to Professor Hasi Wulan for his inspiring lectures, insightful discussions, and continuous encouragement over the years, as well as for bringing the fundamental $Q_K$-Carleson measure and multiplier problems to the mathematical community's attention.

\bigskip

\noindent\textbf{Conflict of interest.} The authors declare no competing interests.

\end{document}